\newcommand{\C}{\mathbb{C}}
\newcommand{\R}{\mathbb{R}}
\newcommand{\Z}{\mathbb{Z}}
\def\a{\mathfrak{a}}
\def\b{\mathfrak{b}}
\def\H{\mathbb{H}}
\def\Im{\mathrm{Im}}
\def\PSL{\mathbf{PSL}}
\def\Re{\mathrm{Re}}
\def\SL{\mathbf{SL}}
\titleformat{\section}[hang]
{\normalfont\filright\large}{\thesection . }{0pt}
{\upshape\bfseries}
\titleformat{\subsection}[hang]
{\itshape}{\thesubsection \ - }{0pt}
{}
\newtheorem{theorem}{Theorem}[section]
\newtheorem{lemma}[theorem]{Lemma}
\newtheorem{corollary}[theorem]{Corollary}
\newtheorem*{Theorem}{Theorem}
\theoremstyle{remark}
\newtheorem*{Remark-non}{Remark}
\newtheorem*{Example-non}{Example}
\newtheorem*{Lemma-non}{Lemma}
\newtheorem*{Proposition-non}{Proposition}
\newtheorem*{Corollary-non}{Corollary}
\theoremstyle{definition}
\newtheorem*{Definition-non}{Definition}
\providecommand{\abs}[1]{\lvert#1\rvert}
\providecommand{\norm}[1]{\lVert#1\rVert}
\newcommand*\rfrac[2]{{}^{#1}\!/_{#2}}
\title{The Hodge Laplacian operator on 1-forms on $\H$ and 1-form $E_\a^1$}
\author{Otto Romero}
\date{\today}
\begin{document}

\maketitle

\begin{abstract}
As is well known, we can average the eigenfunction $y^s$ of the hyperbolic Laplacian on the hyperbolic plane by $\Gamma$ a lattice in $\SL(2,\R)$ to obtain an automorphic form, the non-holomorphic Eisenstein series $E_\a (z,s)$. 

In this note, we choose a particular eigenfunction $y^s dx$ of the Hodge-Laplace operator for 1-forms on the hyperbolic plane. Then, we average by $\Gamma$ to define a 1-form $E_\a^1 \big( (z,v), s \big)$.

We see that $E_\a^1$ admits a Fourier expansion and calculates the corresponding coefficients. Also, we evaluate the integral $\int_{\gamma} E_\a^1$ for when $\gamma$ is a lifting of horocycles and closed geodesics in the unit tangent bundle. Finally, we will obtain an analog to the Rankin-Selberg method for $E_\a^1$.

2020 \textit{Mathematics Subject Classification}: 11M36, 30F30, 30F35, 58C40.

\end{abstract}
\tableofcontents

\section{Introduction}
The theory of \textit{Eisenstein series} is essential in the \textit{spectral theory of automorphic forms}. It was started by Maass and Roelcke, developed by Selberg \cite{Selberg} and  Langlands \cite{Langlands}, \cite{Moe}

\bigskip
Let $G$ be a semisimple Lie group and $\Gamma$ a lattice in $G$. Using the Eisenstein series, Selberg and Langlands studied the spectral decomposition of $L^2(\Gamma \backslash G)$. For example, let $G=\SL(2,\R)$ and $\Gamma=\SL(2,\Z)$, the quotient spaces $\Gamma \backslash \H$ (and $\Gamma \backslash G$) are non-compact and therefore admit a \textit{continuous spectrum} whose 0-eigenfunctions are given by Eisentein series.

\bigskip 
We search to study the continuous spectrum of the Hodge-Laplace operator on 1-forms over the tangent bundle to $\Gamma \backslash \H$.

\bigskip
More explicitly, we compute the Hodge-Laplace operator for 1-forms in $\H$  (Lemma \ref{OHL}, section 6), finding that a 1-eigenform is  $y^s dx$ (Corollary \ref{eigenforma}, section 6). So we define a 1-form $E_\a^1 \big( (z,v), s \big)$ for $\Gamma$ (Definition \ref{ES1}, section 3). The convergence for $\Re(s)>1$ follows from the convergence of the classical Eisenstein series and admits a Fourier expansion; we calculate the corresponding coefficients (see Theorem \ref{PEFourier}, section 3), which is our main result. Also, we calculate the integral $\int_{\gamma} E_\a^1$ for when $\gamma$ is a lifting of horocycles (see formula (\ref{InteHor}), section 4) and closed geodesics in the unit tangent bundle (Lemma \ref{InteGeo}, section 4). Finally, we will obtain an analog to the Rankin-Selberg method for $E_\a^1$ over horocycles (Lemma \ref{RSM}, section 5).

\bigskip
At the end of our work, we found that in \cite{Falliero}, the author studies the space of square-integrable 1-forms on $\Gamma \backslash \H$, with  $G=\SL(2,\R)$ and $\Gamma$ a lattice, including its relationship with the usual decomposition of $L^2(\Gamma \backslash G)$. We think our approach is complementary to \cite{Falliero}.

\section{Preliminaries and Eisenstein series}

The upper half-plane model for the hyperbolic plane $\mathbb{H}$ is given by the surface $\H \; = \; \{ (x,y) \, ; \, y > 0 \}$ with the hyperbolic Riemannian metric:
\begin{equation}
     ds^2 \, = \, \frac{dx^2+dy^2}{y^2}.
\label{RHM}
\end{equation}

\bigskip
The group $\SL(2,\R)$ acts on $\H$ by isometries via fractional linear transformations, i.e., if $g= \left( \begin{matrix}
a & b \\
c & d \\
\end{matrix} \right) \in \SL(2,\R)$ then $\displaystyle{z \longrightarrow g(z) = \frac{az+b}{cz+d}}$. 

\bigskip
We will denote $T \H$ the tangent bundle to $\H$ and $T_z \H$ the tangent space to $\H$ at $z$. In addition, we denote $S \H$ the tangent unit bundle to $\H$ and $S_z \H$ the unit (hyperbolic) circle in $T_z \H$.
We will write $e(z) := \exp(2 \pi i z)$ for all $z \in \C$.

\bigskip
The action of $\SL( 2,\R )$ extends to the tangent bundle $T \H$ as follows:
$$ g \cdot (z, v) \, = \, \big( g(z), g'(z) v \big) \, = \, \Big( \frac{az+b}{cz+d}, \frac{v}{(cz+d)^2} \Big), \; \, z \in \H, \,  v \in T_z \H.  $$

\bigskip
The action of $\SL(2,\R)$ on $\H$ is not effective, since $(-I)(z)=z$ for all $z \in \H$, where $I$ denotes the identity matrix. The group $\PSL(2,\R)$ is the set of all \textit{orientation preserving isometries} of $\H$, which is isomorphic to $\SL(2,\R)$ modulo $\{\pm I\}$.

\bigskip
We consider $\Gamma$ a non-compact \textit{Fuschian group of the first kind}, e.g., $\Gamma(N)$ (\textit{principal congruence subgroup of level} $N$) or $\Gamma_0(N)$ (\textit{congruence subgroup of level} $N$) defined as follows:
\[
\Gamma(N) := \left \{ \left( \begin{matrix}
a & b \\
c & d \\
\end{matrix} \right) \in \SL(2,\Z) \, ; \,  
\left( \begin{matrix}
a & b \\
c & d \\
\end{matrix} \right) \equiv 
 \left( \begin{matrix}
1 & 0 \\
0 & 1 \\
\end{matrix} \right) \text{ mod } N \right \},
\]
\[
\Gamma_0(N) := \left \{ \left( \begin{matrix}
a & b \\
c & d \\
\end{matrix} \right) \in \SL(2,\Z) \, ; \,  
\left( \begin{matrix}
a & b \\
c & d \\
\end{matrix} \right) \equiv 
 \left( \begin{matrix}
\ast & \ast \\
0 & \ast \\
\end{matrix} \right) \text{ mod } N \right \}.
\]
In particular, $\Gamma(1)=\SL(2,\Z)$.

\bigskip
Let $\a$ be a cusp for $\Gamma$; the stabilizer $\Gamma_\a$ is an infinite cyclic group. If we denote the generator by $\gamma_\a$, then 
$$ \Gamma_\a := \{ \gamma \in \Gamma \, ; \, \gamma \a = \a  \} = \big \langle \gamma_\a \big \rangle.  $$

\bigskip
If $\b$ denotes another (o the same) cusp, it is said that $\a$ and $\b$ are equivalent if $\b =\gamma \a$ for some $\gamma \in \Gamma$. In this case, we have $\Gamma_\a = \Gamma_\b$. For each cusp we also consider the transformations $\sigma_\a \in \Gamma$ such that
$$ \sigma_\a (\infty) = \a,
\hspace{2cm}
\sigma_\a^{-1} \Gamma_\a \sigma_\a =
\left( \begin{matrix}
1 & \Z \\
0 &  1 \\
\end{matrix} \right).
$$

\bigskip
The quotient space $\Gamma \backslash \H$ is a \textit{non-compact hyperbolic orbifold of finite volume} with finite cusps. For the preliminaries of hyperbolic geometry, Fuschian groups of the first kind, modular orbifold, see, e.g., \cite{Linda}, \cite{Iwaniec}, and \cite{Verjovsky}.

\bigskip
Let $\a$ be a cusp for $\Gamma$, the stabilizer $\Gamma_\a$ is an infinite cyclic group. If we denote the generator by $\gamma_\a$, then 
$$ \Gamma_\a := \{ \gamma \in \Gamma \, ; \, \gamma \a = \a  \} = \big \langle \gamma_\a \big \rangle.  $$

\bigskip
Let $f: \H \times \{ s \in \C \, ; \, \Re(s) > 1 \} \longrightarrow \C$ be the map given by $f(z,s):= \Im (z)^s = y^s$, where $z=x+iy$. The function $f$ is a eigenfunction of hyperbolic Laplacian $\triangle^0$, i.e., 
\begin{equation}
 \triangle^0 f \, = \, s(1-s) f,
\label{eigen}
\end{equation}
where $ \triangle^0 \, = \, - y^2 \Big( \frac{\partial^2 }{\partial x^2} + \frac{\partial^2 }{\partial y^2} \Big)$.

\bigskip
The \textit{Eisenstein series} $E_\a (z,s)$ for $\Gamma$ associated with the cusp $\a$ is defined as follows:
\[
    E_\a (z,s) \, = \, \sum_{\gamma \in  \Gamma_\a \backslash \Gamma} f \big( \sigma_\a^{-1} \gamma z,s \big) \, = \, \sum_{\gamma \in  \Gamma_\a \backslash \Gamma} \Im \big( \sigma_\a^{-1} \gamma z \big)^s.
\]

\bigskip
The series $E_\a (z,s)$ is well-defined and $\Gamma$-invariant, i.e.,
\[
  E_\a \big(\sigma(z),s\big) \, = \, E_\a (z,s), \; \, \sigma \in \Gamma.
\]

\bigskip
The series converges absolutely (and uniformly in compact subsets) for $\Re(s) > 1$;  see for a proof, \cite{Selberg} or \cite{Kubota}. A fundamental property of $E_\a (z,s)$ is that it admits a meromorphic continuation (in the variable $s \in \C$) to the whole complex plane; moreover, they are real analytic in the variable $z \in \H$. From (\ref{eigen}) we can verify that:
$$  \triangle^0 \, E_\a (z,s) \, = \, s(1-s) \, E_\a(z,s). $$

\bigskip
The following Fourier expansion applies:
$$    E_\a \big( \sigma_\b z,s \big) \, = \, \sum_{n \in \Z}  C_{\a \b, n}(y,s) \, e(nx), $$
explicitly the coefficients are given by:
$$ C_{\a \b, 0}(y,s) \, = \, \delta_{\a \b} y^s + \frac{ \sqrt{\pi} \, \Gamma \big( s-\frac{1}{2} \big) }{ \Gamma(s) } \, \varphi_{\a \b} (s) \, y^{1-s}, $$ 
with
$$ \delta_{\a \b}  \, = \, 
\begin{cases}
1 & \text{if  $\a = \b$ } \\
0 & \text{if  $\a \neq \b$ }
\end{cases} 
$$
and for $n \neq 0$ we have
$$ C_{\a \b, n}(y,s) \, = \, \frac{ 2 \pi^s \sqrt{y} }{ \Gamma(s) } \cdot \abs{n}^{s- \rfrac{1}{2}} \cdot K_{s-\rfrac{1}{2}} \big( 2 \pi \abs{n} y \big) \cdot \varphi_{\a \b} (n,s), $$
where $K$ is the \textit{modified Bessel function of the second kind}, $\varphi_{\a \b} (s)$ and $\varphi_{\a \b}(n,s)$ are the \textit{Dirichlet series} given by:
\begin{equation}
 \varphi_{\a \b} (s) \, := \, \sum_{c>0} c^{-2s} \# \Big \{ d \in [0,c) \; ; \;  
\left( \begin{matrix}
\ast  & \ast \\
c & d
\end{matrix} \right) \in \sigma_\a^{-1} \Gamma \sigma_\b
\Big \},
\label{SD1}
\end{equation}
\begin{equation}
  \varphi_{\a \b} (n,s) \, := \, \sum_{c>0} c^{-2s}  \sum_{\substack{ d \in [0,c)  \\ \left( \begin{matrix}
\ast  & \ast \\
c & d
\end{matrix} \right) \in \sigma_\a^{-1} \Gamma \sigma_\b  }} e\big( n \tfrac{d}{c} \big).
\label{SD2}
\end{equation}

\vspace{1.5cm}
\section{1-form \texorpdfstring{$E_\a^1$}{Lg}}

We recall some geometrical facts relative to the hyperbolic metric,
$$ \left \langle v_z, w_z \right \rangle \, = \, \frac{1}{\Im(z)^2} \, v \cdot w, \; \, z \in \C, \, v_z,w_z \in T_z \H, $$
where $\cdot$ denote the usual Euclidean inner product of $\R^2$ and $\left \langle \cdot, \cdot \right \rangle$ denote the hyperbolic metric. The following equality is satisfied:
\[ 
    \norm{v_z}^{\text{H}} \, = \, \sqrt{\left \langle v_z, v_z \right \rangle}
    \, = \, \frac{ \norm{v} }{\Im(z)}, \; \, z \in \H, \, v_z \in T_z \H,
\]
where $\norm{ \cdot }$ denotes the usual norm of $\R^2$ and $\norm{ \cdot }^{\text{H}}$ denotes the hyperbolic norm. If there is no confusion, we will omit the base point $z$. 

\bigskip
We will write $(z,\theta) \in S_z \H$ by the unit hyperbolic vector $v_z(\theta)$ on $T_z \H$ where $\theta$ is measured from the vertical counter-clockwise. That is to say,
$$ v_z(\theta) = \Im(z) \cdot  (-\sin \theta , \cos \theta ). $$
For a vector $v_z \in T_z \H$ we write $ v_z = \norm{v_z}^{\text{H}} \cdot v_z(\theta)$ for $\theta \in [0,2\pi)$. Sometimes we will just write $v(\theta)$ instead of $v_z(\theta)$.

\vspace{1cm}
The action of $\SL(2,\R)$ on $S \H$ is given by:
\begin{equation}
\gamma (z,\theta) = \big( z', \theta' \big)
\; \iff \;  
\gamma \big( z, v_z(\theta) \big) = \big( z',  v_{z'}(\theta') \big), 
\label{Accion1}
\end{equation}
where
$$ z'= \frac{az+b}{cz+d},
\; \, 
\theta '= \theta - 2 \, \text{arg} (cz+d), 
\; \, \text{and  } 
\gamma = \left( \begin{matrix}
a  & b \\
c  & d
\end{matrix} \right) \in \SL(2,\R). $$

\bigskip
From now on, we will denote by $\alpha^1$ the 1-form defined by the formula:
$$ \alpha^1 (z,v_z) \, := \, \Im (z)^s \cdot dx(v_z), \; \, z \in \H, \, v_z \in T_z \H, \, s \in \C. $$

\begin{Remark-non}
The 1-form $\alpha^1$ depends on $s$, but we will omit in writing the dependence not to burden the notation.
\end{Remark-non}

\bigskip
Let's see how to evaluate explicitely  the 1-form $\alpha^1$, by (\ref{Accion1}) 
\begin{equation}
 \alpha^1 \big( \gamma(z,\theta) \big) 
 \, = \, -\frac{y^{s+1}}{ \abs{ cz+d }^{2s+2} } \cdot  \sin \big( \theta - 2 \, \text{arg} (cz+d) \big), \; \, \gamma = \left( \begin{matrix}
a  & b \\
c  & d
\end{matrix} \right) \in \SL(2,\R).     
\label{Eva0}
\end{equation}
For a vector $v_z \in T_z \H$ we have:
\[ 
 \alpha^1 \big( \gamma (z,v_z) \big) 
 \, = \, \norm{v_z}^{\text{H}} \cdot  \alpha^1 \big( \gamma(z,\theta) \big).
\]

\bigskip
\begin{Definition-non}
Let $\Omega := \left \{ \big( z, v_z \big) \in T_z \H \, ; \,  \norm{v_z}^{\text{H}} \leq 1 \right \}$, i.e., the unit disk bundle associated to the tangent bundle of the hyperbolic plane. The 1-form $E_\a^1 \big( (z, v_z),s \big)$  for $\Gamma$ associated to cusp $\a$ with $(z, v_z) \in \Omega $, $s \in \C$ such that $\Re(s)>1 $, is defined by:
\begin{equation}
      E_\a^1 \big( (z, v_z),s \big) \, = \, \sum_{\gamma \in \Gamma_\a \backslash \Gamma} \alpha^1 \big( \sigma_\a^{-1} \gamma ( z, v_z ) \big).
\label{ES1}
\end{equation}
\end{Definition-non}

\bigskip
Let $\gamma  \in \Gamma_\a \backslash \Gamma$, then there exists $n \in \Z$ and $g \in \Gamma$ such that $\gamma = \gamma_\a^n g$. We have $\sigma_\a^{-1} \gamma = \big( \sigma_\a^{-1} \gamma_\a^n \sigma_\a \big) \sigma_\a^{-1} g$, but $\sigma_\a^{-1} \gamma_\a^n  \sigma_\a (\infty) = \infty$. Then $\sigma_\a^{-1} \gamma_\a^n \sigma_\a$ is a translation of the form $z \to z+m$, with $m \in \Z$. Therefore
\begin{equation}
\alpha^1 \big( \sigma_\a^{-1} \gamma ( z, v_z ) \big) =
   \alpha^1 \big( \sigma_\a^{-1} g ( z, v_z ) \big), \; \, (z,v_z) \in T_z \H.
\label{BienDef1}
\end{equation}
The equation in (\ref{BienDef1}) implies that the sum in (\ref{ES1}) is well defined.

\vspace{1cm}
The 1-form $E_\a^1 \big( (z, v_z),s \big)$ is $\Gamma$-invariant, i.e.,
\[
  E_\a^1 \big( \lambda(z, v_z),s \big) \, = \, E_\a^1 \big( (z, v_z),s \big), \; \, \lambda \in \Gamma.
\]

\bigskip
Now we check the convergence, for $(z, v_z) \in \Omega$, $\Re(s)>1 $, by (\ref{ES1}) and (\ref{Eva0})
\begin{equation}
\left| E_\a^1 \big( (z, v_z),s \big) \right| \leq 
    \norm{v_z}^{\text{H}} \sum_{\gamma \in \Gamma_\a \backslash \Gamma} 
    \left| \Im \big( \sigma_\a^{-1} \gamma z \big)^s \right|
 \leq
 \sum_{\gamma \in \Gamma_\a \backslash \Gamma} \Im \big( \sigma_\a^{-1} \gamma z \big)^{\Re(s)}.
\label{Conv}
\end{equation}
Therefore, the convergence of the 1-form $E_\a^1$ follows from the convergence of the series on the right in (\ref{Conv}), which is a consequence of the convergence of the Eisenstein series $E_\a (z,s)$ for $\Re(s)>1$, see \cite{Selberg} or \cite{Kubota} for a proof.

\bigskip
It is pertinent to comment now that in this paper, we will not deal with the problem of meromorphic continuation of $E_\a^1$.

\bigskip
Before calculating the coefficients of the Fourier expansion of $E_\a^1$, we review the \textit{Kloosterman sums}, which appear, for example, in the Fourier coefficients of automorphic series.  The Kloosterman sums for $\Gamma$ are defined as
\begin{equation}
\displaystyle{ \mathcal{S}_{\a\b} (n,m;c) \, := \, \sum_{ \left( \begin{matrix}
a  & \ast \\
c  & d
\end{matrix} \right) \in B  \backslash \sigma_\a^{-1} \Gamma \sigma_b / B } e \Big( m \, \frac{a}{c} + n \, \frac{c}{d} \Big)},
\label{Kloos}
\end{equation}
with $n,m \in \Z$ and $B:= \left \{  \left( \begin{matrix}
1  & k \\
0  & 1
\end{matrix} \right) \; ; \; k \in \Z \right \}$. We rewrite the series in (\ref{SD1}) and (\ref{SD2})
\begin{equation}
\varphi_{\a \b}(s) \, = \, \sum_{c >0} \, c^{-2s-2} \mathcal{S}_{\a\b} (0,0;c) \hspace{2cm} \varphi_{\a \b}(n,s) \, = \, \sum_{c >0} \, c^{-2s-2} \mathcal{S}_{\a\b} (n,0;c),
\label{FL1}
\end{equation}

\begin{equation}
\varphi_{\a \b,1}(n,s) \, := \, \sum_{c >0} \, c^{-2s-2} \sum_{ \left( \begin{matrix}
\ast  & \ast \\
c  & d
\end{matrix} \right) \in B  \backslash \sigma_\a^{-1} \Gamma \sigma_b / B } \cos \Big(2\pi n \, \frac{c}{d} \Big), 
\label{FL2A}
\end{equation}
\begin{equation}
\varphi_{\a \b,2}(n,s) \, := \, \sum_{c >0} \, c^{-2s-2} \sum_{ \left( \begin{matrix}
\ast  & \ast \\
c  & d
\end{matrix} \right) \in B \backslash \sigma_\a^{-1} \Gamma \sigma_b / B } \sin \Big(2\pi n \, \frac{c}{d} \Big),
\label{FL2B}
\end{equation}
i.e.,
$$ \varphi_{\a \b}(n,s) \, = \, \varphi_{\a \b,1}(n,s) + i \, \varphi_{\a \b,2}(n,s), $$
and where $ B \backslash \sigma_\a^{-1} \Gamma \sigma_b / B$ denotes the  \textit{double coset decomposition}, see e.g. \cite[Chapter 2.5]{Iwaniec}.

\bigskip
The following result determines the Fourier coefficients of $E_\a^1$ (compare \cite[Theorem 3.4]{Iwaniec} or \cite[Chapter 2.2]{Kubota}):
\begin{theorem}
Let $z=x+iy \in \C$ and $v_z= v_z(\theta) \in T_z \H$ for some $\theta \in [0,2\pi)$ a unit hyperbolic vector. In addition, we consider $\a$, $\b$ cusps of $\Gamma$. Then for $\Re(s) > 1$ we have:
$$  E_\a^1 \Big( \sigma_\b \big(z, v_z(\theta) \big),s \Big) 
\, = \,  
 \delta_{\a \b} \, y^{s+1} \sin \theta  
\, + \, \sqrt{\pi} \, y^{-s} \sin \theta \, \frac{1}{s+1} \frac{ \Gamma ( s+\rfrac{1}{2} )}{\Gamma(s)} \cdot \varphi_{\a \b}(s) \hspace{6.5cm} $$
$$
- \, 2 \pi^{s+1} \, y^{\rfrac{1}{2}} \sin \theta \cdot \Gamma(s+1)^{-1} \, \sum_{n \neq 0} e(nx) \cdot \abs{n}^{s+\rfrac{1}{2}} \cdot K_{s + \rfrac{1}{2}} \big( 2\pi \abs{n} y \big) \cdot \varphi_{\a \b}(n,s) $$
$$ + \, 4 \pi^{s+2} \, y^{\rfrac{3}{2}} \sin \theta \cdot \Gamma(s+2)^{-1} \, \sum_{n \neq 0} e(nx) \cdot  \abs{n}^{s+\rfrac{3}{2}}  
\cdot K_{s+\rfrac{3}{2}} \big( 2\pi \abs{n} y \big) \cdot \varphi_{\a \b}(n,s)  $$
$$ \hspace{2cm} + \, 8 \pi^{s+1} \,  y^{\rfrac{3}{2}} \cos \theta \cdot \sin(\pi s) \cdot \Gamma (-s-1) \, \sum_{n >0} \sin(2 \pi nx) \cdot n^{s+\rfrac{3}{2}} \cdot K_{-s-\rfrac{1}{2}} \big( 2\pi n y \big) \cdot \varphi_{\a \b,1}(n,s) $$
\begin{equation}
\hspace{2.15cm} + \, 8 \pi^{s+1} \,  y^{\rfrac{3}{2}} \cos \theta \cdot \sin(\pi s) \cdot \Gamma (-s-1) \, \sum_{n >0} \cos(2 \pi nx) \cdot n^{s+\rfrac{3}{2}} \cdot K_{-s-\rfrac{1}{2}} \big( 2\pi n y \big) \cdot \varphi_{\a \b,2}(n,s). 
\label{PP}
\end{equation}
\label{PEFourier}
\end{theorem}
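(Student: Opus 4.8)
The plan is to adapt the classical unfolding computation for the scalar Eisenstein series (as in \cite[Theorem 3.4]{Iwaniec}) to the extra angular data carried by $\alpha^1$. First I would record the period-$1$ behaviour in $x$: since $\sigma_\b\left(\begin{smallmatrix}1&1\\0&1\end{smallmatrix}\right)=\gamma_\b\sigma_\b$ with $\gamma_\b\in\Gamma_\b\subset\Gamma$, and the translation $z\mapsto z+1$ leaves the angle $\theta$ unchanged (as $\arg(1)=0$ in (\ref{Accion1})), the $\Gamma$-invariance of $E_\a^1$ shows that $E_\a^1(\sigma_\b(z,v_z(\theta)),s)$ is periodic of period $1$ in $x$ for each fixed $\theta$; hence a genuine Fourier expansion in $x$ exists, with coefficients $\int_0^1(\cdots)\,e(-nx)\,dx$. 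Absolute convergence for $\Re(s)>1$, already established in (\ref{Conv}), justifies every interchange of summation and integration below.

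The core algebraic step is to expand each summand via (\ref{Eva0}). Writing $w=\sigma_\a^{-1}\gamma\sigma_\b=\left(\begin{smallmatrix}a&b\\c&d\end{smallmatrix}\right)$ and $\sin(\theta-2\arg(cz+d))=\sin\theta\cos(2\arg(cz+d))-\cos\theta\sin(2\arg(cz+d))$, together with $\cos(2\arg(cz+d))=\frac{(cx+d)^2-c^2y^2}{\abs{cz+d}^2}$ and $\sin(2\arg(cz+d))=\frac{2(cx+d)cy}{\abs{cz+d}^2}$, splits $\alpha^1(w(z,\theta))$ cleanly into a $\sin\theta$-part and a $\cos\theta$-part. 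I would peel off the identity coset (which occurs exactly when $\a=\b$) to produce the diagonal term $\delta_{\a\b}\,y^{s+1}\sin\theta$, and parametrise the remaining cosets of $B_\a\backslash\sigma_\a^{-1}\Gamma\sigma_\b$ by their bottom rows $(c,d)$ with $c>0$. Splitting $d=d_0+kc$ and absorbing the sum over $k$ (i.e. $z\mapsto z+k$) into the period integral unfolds each Fourier coefficient into an integral over all of $\R$, exactly as in the scalar case.

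Everything then reduces to the two model integrals $I_\lambda(n):=\int_{-\infty}^\infty (u^2+y^2)^{-\lambda}e(-nu)\,du$ and $\int_{-\infty}^\infty u\,(u^2+y^2)^{-\lambda}e(-nu)\,du$. The first produces $y^{1-2\lambda}\sqrt\pi\,\Gamma(\lambda-\tfrac12)/\Gamma(\lambda)$ when $n=0$ and $\tfrac{2\pi^\lambda\abs{n}^{\lambda-1/2}}{\Gamma(\lambda)\,y^{\lambda-1/2}}K_{\lambda-1/2}(2\pi\abs{n}y)$ when $n\neq0$. For the $\sin\theta$-part I would use $(cx+d)^2-c^2y^2=\abs{cz+d}^2-2c^2y^2$ to write it as a combination of $I_{s+1}$ and $I_{s+2}$; the two pieces give the $K_{s+1/2}$ and $K_{s+3/2}$ terms for $n\neq0$, while for $n=0$ they recombine, through $\Gamma(s+2)=(s+1)s\Gamma(s)$ and $\Gamma(s+\tfrac32)=(s+\tfrac12)\Gamma(s+\tfrac12)$, into the single coefficient $\frac{1}{s+1}\frac{\Gamma(s+1/2)}{\Gamma(s)}$. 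The odd integral I would handle by integration by parts, $u(u^2+y^2)^{-(s+2)}=-\tfrac{1}{2(s+1)}\frac{d}{du}(u^2+y^2)^{-(s+1)}$, reducing it to $-\frac{\pi i n}{s+1}I_{s+1}(n)$.

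The delicate part — and where I expect the real work to lie — is the $\cos\theta$-part. After the integration-by-parts reduction it carries a factor $n$ and a $K_{s+1/2}$, and its $c,d_0$-sums produce the complex series $\varphi_{\a\b}(n,s)=\varphi_{\a\b,1}(n,s)+i\,\varphi_{\a\b,2}(n,s)$. Pairing the $n$ and $-n$ terms, and using $\varphi_{\a\b}(-n,s)=\varphi_{\a\b,1}(n,s)-i\,\varphi_{\a\b,2}(n,s)$, converts $e(\pm nx)$ into $\sin(2\pi nx)$ and $\cos(2\pi nx)$ and collapses the two imaginary units into a real factor; simultaneously the reflection formula $\Gamma(s+2)^{-1}=\pi^{-1}\sin(\pi s)\,\Gamma(-s-1)$ together with $K_{s+1/2}=K_{-s-1/2}$ recasts the prefactor into $8\pi^{s+1}\sin(\pi s)\Gamma(-s-1)$. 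Making the factor of $2$, the cancellation of $i$, and this Gamma-reflection conspire into exactly the last two lines of (\ref{PP}) is the main bookkeeping obstacle; by contrast the $\sin\theta$-part is a comparatively routine shift of the classical exponents. Finally I would reassemble the $c,d_0$-sums into $\varphi_{\a\b}(s)$, $\varphi_{\a\b}(n,s)$, $\varphi_{\a\b,1}(n,s)$ and $\varphi_{\a\b,2}(n,s)$ according to (\ref{FL1})--(\ref{FL2B}), which completes the expansion.
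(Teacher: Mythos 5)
Your proposal is correct and follows essentially the same route as the paper: the double-coset decomposition of $\sigma_\a^{-1}\Gamma\sigma_\b$ with the identity cosets peeled off, the same trigonometric splitting of $\alpha^1$ into $\sin\theta$- and $\cos\theta$-parts via $\cos(2\arg(cz+d))$ and $\sin(2\arg(cz+d))$, unfolding to the same model integrals, the identical recombination of the two $n=0$ Gamma factors, and the same $\pm n$ pairing that produces $\varphi_{\a\b,1}$ and $\varphi_{\a\b,2}$. The only cosmetic difference is that you evaluate the odd integral by integration by parts down to the even one and then invoke the reflection formula, whereas the paper quotes the table integral (\ref{ExpF14}) directly; both yield the same prefactor $8\pi^{s+1}\sin(\pi s)\,\Gamma(-s-1)$.
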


\begin{proof}
Let 
$ \Omega_\infty := \left\{ \left( \begin{matrix}
\ast  & \ast \\
0  & \ast
\end{matrix} \right)  \in \sigma_\a^{-1} \Gamma \sigma_b  \right \}$, if $\a$ and $\b$ are not equivalent then $ \Omega_\infty = \emptyset $. If $\a$ is equivalent to $\b$ then $\Omega_\infty = B w_\infty B = w_\infty B = B w_\infty $
for some
$ w_\infty =  \left( \begin{matrix}
1  & \ast \\
0  & 1
\end{matrix} \right) \in \sigma_a^{-1} \Gamma \sigma_b$. We also consider $ \Omega_{\rfrac{d}{c}} := B \, w_{\rfrac{d}{c}} B $ for some $ w_{\rfrac{d}{c}} :=  \left( \begin{matrix}
\ast  & \ast \\
c  & d
\end{matrix} \right) \in \sigma_a^{-1} \Gamma \sigma_b$ with $c>0$.

\bigskip
By \cite[Theorem 2.7]{Iwaniec}, we have a disjoint union
\begin{equation}
 \sigma_\a^{-1} \Gamma \sigma_\b \, = \, \delta_{\a \b}  \Omega_\infty \cup \bigcup_{c >0} \bigcup_{d( \text{mod} \, c)} \Omega_{\rfrac{d}{c}} .
\label{Exp4}
\end{equation}

\bigskip
On the other hand, for (\ref{ES1})
 $$ E_\a^1 \big( \sigma_\b (z, v_z),s \big) \, = \, 
\sum_{\gamma \in \Gamma_\a \backslash \Gamma} \alpha^1 \Big( \sigma_\a^{-1} \gamma \big( \sigma_\b ( z, v_z ) \big) \Big) =
\sum_{\tau \in B \backslash \sigma_\a^{-1} \Gamma \sigma_\b} \alpha^1 \big( \tau ( z, v_z ) \big). $$

\bigskip
But by (\ref{Exp4}), and using the fact that $\alpha^1$ is $B$-invariant, we have
\begin{equation}
E_\a^1 \big( \sigma_\b (z, v_z),s \big) \, = \,  \delta_{\a \b} \, \alpha^1 (z,v_z) \, + \, \sum_{c >0} \sum_{d( \text{mod} \, c)} \sum_{n \in \Z}  \alpha^1 \big( w_{\rfrac{d}{c}} (z+n,v_{z+n} ) \big). 
\label{Exp4E}
\end{equation}

 \bigskip
Applying Poisson's summation formula we know that
\begin{equation}
 \sum_{n \in \Z}  \alpha^1 \big( w_{\rfrac{d}{c}} (z+n,v_{z+n} ) \big) \, = \,
 \sum_{n \in \Z} \int_{-\infty}^{+\infty}  \alpha^1 \big( w_{\rfrac{d}{c}} (z+t,v_{z+t} ) \big) \, e(-nt) dt. 
\label{Exp6}
\end{equation}

\bigskip
Let $\gamma_t := \left( \begin{matrix}
1  & t \\
0  & 1
\end{matrix} \right)$ with $t \in \R$, then 
$$  \alpha^1 \big( w_{\rfrac{d}{c}} (z+t,v_{z+t} ) \big) \, = \, 
\alpha^1 \big( w_{\rfrac{d}{c}} \gamma_t (z, \theta ) \big), $$
with $v_{z+t}=v_{z+t}(\theta)$. But since $ w_{\rfrac{d}{c}} \gamma_t = \left( \begin{matrix}
a  & at+b \\
c  & ct+d
\end{matrix} \right)$, by (\ref{Eva0}) we have the next equality:
$$
 \alpha^1 \big( w_{\rfrac{d}{c}} (z+t,v_{z+t} ) \big) 
 \, = - \, \frac{y^{s+1}}{ \abs{ cz+ct+d }^{2s+2} } \cdot \sin \big( \theta - 2 \, \text{arg} (cz+ct+d) \big)
 $$
\begin{equation}
 = - \, \frac{y^{s+1}}{ \abs{ cz+ct+d }^{2s+4} } \cdot \sin \theta \cdot (cx+ct+d)^2
 + \, \frac{y^{s+1}}{ \abs{ cz+ct+d }^{2s+4} } \cdot \sin \theta \cdot  c^2 y^2 
 + \, \frac{y^{s+1}}{ \abs{ cz+ct+d }^{2s+4} } \cdot 2  \cos \theta \cdot cy(cx+ct+d).
\label{Exp12}
\end{equation}

\bigskip
Substituting (\ref{Exp12}) in (\ref{Exp6}) we have
 $$  \sum_{n \in \Z}  \alpha^1 \big( w_{\rfrac{d}{c}} (z+n,v_{z+n} ) \big)  = - \, y^{s+1} \,  \sin \theta \, \sum_{n \in \Z} \int_{-\infty}^{+\infty} \frac{ (cx+ct+d)^2 }{  \abs{ cz+ct+d }^{2s+4}  }  \, e(-nt) \, dt $$
\begin{equation}
\, + \, y^{s+3} \, c^2 \, \sin \theta \, \sum_{n \in \Z} \int_{-\infty}^{+\infty} 
\frac{1}{  \abs{ cz+ct+d }^{2s+4}  }  \, e(-nt) \, dt 
\, + \,
2y^{s+2} \, c \, \cos \theta \,  \sum_{n \in \Z} \int_{-\infty}^{+\infty} 
\frac{ cx+ct+d }{  \abs{ cz+ct+d }^{2s+4}  }  \, e(-nt) \, dt. 
\label{Exp14}
\end{equation}

\bigskip
Now we simplify the integrals in (\ref{Exp14}). For this, we first change variable $t=r-x-\rfrac{d}{c}$. The following identities are satisfied:
$$  \int_{-\infty}^{+\infty} \frac{ (cx+ct+d)^2 }{ \abs{cz+ct+d}^{2s+4} }  \, e(-nt) dt \, = \, e\big(nx+ \rfrac{nd}{c}\big) \, c^{-2s-2} y^{-2s-1} \int_{-\infty}^{+\infty} \frac{1}{ (t^2 + 1)^{s+1} } \, e(-nyt) \, dt \hspace{4cm} $$
\begin{equation}
\hspace{4.5cm}  - \, e\big(nx+ \rfrac{nd}{c}\big) \, c^{-2s-2} \, y^{-2s-1} \int_{-\infty}^{+\infty} \frac{1}{(t^2 + 1)^{s+2}} \, e(-nyt) \, dt. 
\label{Fou7}
\end{equation}
\begin{equation}
\int_{-\infty}^{+\infty} \frac{1}{\abs{ cz+ct+d }^{2s+4}} \, e(-nt) \, dt \, = \,
e\big(nx+ \rfrac{nd}{c}\big) \, c^{-2s-4}   y^{-2s-3} \int_{-\infty}^{+\infty} \frac{1}{(t^2 + 1)^{s+2}} \, e(-nyt) \, dt. \hspace{3cm}
\label{Fou8}
\end{equation}
\begin{equation}
 \int_{-\infty}^{+\infty} \frac{ cx+ct+d }{  \abs{ cz+ct+d }^{2s+4}  }  \, e(-nt) dt \, =
 - \, 2i \, e\big(nx+ \rfrac{nd}{c}\big) \, c^{-2s-3}  y^{-2s-2} \int_{0}^{\infty} \frac{t}{(t^2 +1)^{s+2}} \, \sin (2\pi nyt) \, dt.
 \hspace{3cm}
\label{Fou9}
\end{equation}

\bigskip
Replacing (\ref{Fou7}), (\ref{Fou8}) and (\ref{Fou9}) in (\ref{Exp14})
$$ \displaystyle \sum_{n \in \Z} \alpha^1 \big( w_{\rfrac{d}{c}} (z+n,v_{z+n} ) \big) 
\, = \, - \, y^{-s} \, \sin \theta \sum_{n \in \Z} \, e\big(nx+ \rfrac{nd}{c}\big) \, c^{-2s-2} \int_{-\infty}^{+\infty} \frac{1}{(t^2 + 1)^{s+1}  }  \, e(-nyt) \, dt \hspace{3cm} $$
 $$  + \, y^{-s} \, \sin \theta \sum_{n \in \Z} \, e\big(nx+ \rfrac{nd}{c}\big) \, c^{-2s-2} \int_{-\infty}^{+\infty} \frac{1}{(t^2 + 1)^{s+2}  } \, e(-nyt) \, dt $$ 
$$ + \, y^{-s} \, \sin \theta \, \sum_{n \in \Z} e\big(nx+ \rfrac{nd}{c}\big) \, c^{-2s-2} \int_{-\infty}^{+\infty} \frac{1}{(t^2 + 1)^{s+2}  }  \, e(-nyt) \, dt $$
\begin{equation}
\hspace{0.75cm} - \, 4i y^{-s} \,  \cos \theta \, \sum_{n \in \Z}  e\big(nx+ \rfrac{nd}{c}\big) \, c^{-2s-2}  \int_{0}^{\infty} \frac{t}{(t^2 +1)^{s+2}} \, \sin (2\pi nyt) \, dt. 
\label{Exp24}
\end{equation}

\bigskip
Substituting (\ref{Exp24}) in (\ref{Exp4E}) we have
$$ E_\a^1 \big( \sigma_\b (z, v_z),s \big)  $$
$$ = \,  \delta_{\a \b} \, y^{s+1} \sin \theta  
\, - \, y^{-s} \sin \theta \, \sum_{n \in \Z} 
e(nx) \, \Bigg[ \int_{-\infty}^{+\infty} \frac{1}{ (t^2 + 1)^{s+1}  }  \, e(-nyt) \, dt \Bigg] \Bigg[ \sum_{c >0} \, c^{-2s-2} \sum_{ \left( \begin{matrix}
a  & \ast \\
c  & d
\end{matrix} \right) \in B \backslash \sigma_\a^{-1} \Gamma \sigma_b / B } \, e\Big( \frac{nd}{c}\Big) \Bigg]  $$
$$ + \, 2 y^{-s} \sin \theta \, \sum_{n \in \Z} e(nx) \, \Bigg[ \int_{-\infty}^{+\infty} \frac{1}{ (t^2 + 1)^{s+2}  }  \, e(-nyt) \, dt \Bigg] \Bigg[  \sum_{c >0} \, c^{-2s-2} \sum_{ \left( \begin{matrix}
a  & \ast \\
c  & d
\end{matrix} \right) \in B \backslash \sigma_\a^{-1} \Gamma \sigma_b / B } \, e\Big(\frac{nd}{c}\Big) \Bigg] \hspace{1.5cm} $$
\begin{equation}
- \, 4i y^{-s} \cos \theta \, \sum_{n \in \Z} e(nx) \,  \bigg[ \int_{0}^{\infty} \frac{t}{ (t^2 +1)^{s+2} } \, \sin (2\pi nyt) \, dt \Bigg] \Bigg[ \sum_{c >0} \, c^{-2s-2} \sum_{ \left( \begin{matrix}
a  & \ast \\
c  & d
\end{matrix} \right) \in B \backslash \sigma_\a^{-1} \Gamma \sigma_b / B } \,  e\Big(\frac{nd}{c}\Big) \bigg].
\hspace{0.75cm}
\label{Exp40}
\end{equation}

\bigskip
We recognize in (\ref{Exp40}) the Kloosteman sums (\ref{Kloos}) and its associated series (\ref{FL1}), (\ref{FL2A}) and (\ref{FL2B}). In addition, we have the following well-known integral:
\begin{equation}
 \int_{-\infty}^{\infty} \frac{1}{( t^2 + 1 )^{s}} \, e(-nyt) \, dt  \, = \, 
\begin{cases}
2 \pi^s  \abs{n}^{s - \rfrac{1}{2}}  \, y^{s - \rfrac{1}{2}}
\, \Gamma(s)^{-1} K_{s - \rfrac{1}{2}} \big( 2\pi \abs{n} y \big) & \text{if  $n \neq 0$ } \\
  \sqrt{\pi} \,  \frac{ \Gamma ( s - \rfrac{1}{2} )}{ \Gamma(s) } & \text{if $ n = 0$ }
\end{cases} 
\label{IntInfinita}
\end{equation}

\bigskip
Then by (\ref{IntInfinita})
$$  E_\a^1 \big( \sigma_\b (z, v_z),s \big) \, = \, \delta_{\a \b} \, y^{s+1} \sin \theta \, - \, y^{-s} \sin \theta \, \sqrt{\pi} \,  \frac{ \Gamma ( s+\rfrac{1}{2} )}{ \Gamma(s+1) } \, \varphi_{\a \b}(s) 
\, + \, 2 y^{-s} \sin \theta \, \sqrt{\pi} \, \frac{ \Gamma (s+\rfrac{3}{2} )}{ \Gamma(s+2) } \, \varphi_{\a \b}(s) $$
$$
- \,  2 \pi^{s+1} \, y^{\rfrac{1}{2}} \sin \theta \, \Gamma(s+1)^{-1} \, \sum_{n \neq 0} 
e(nx) \cdot \abs{n}^{s+\rfrac{1}{2}} \cdot K_{s + \rfrac{1}{2}} \big( 2\pi \abs{n} y \big) \cdot \varphi_{\a \b}(n,s) $$
$$
 + \, 4 \pi^{s+2} y^{\rfrac{3}{2}} \sin \theta \, \Gamma(s+2)^{-1} \, \sum_{n \neq 0} e(nx) \cdot   \abs{n}^{s+\rfrac{3}{2}} \cdot K_{s+\rfrac{3}{2}} \big( 2\pi \abs{n} y \big) \cdot \varphi_{\a \b}(n,s) $$
\begin{equation}
 \, - \, 4i  y^{-s} \cos \theta \, \sum_{n >  0} e(nx) \, I_n(s) \, \Bigg[ \sum_{c >0} \, c^{-2s-2}  \mathcal{S}_{\a\b} (n,0;c) \Bigg] 
 - \, 4i  y^{-s} \cos \theta \, \sum_{m > 0} e(-mx) \, I_{-m}(s) \, \Bigg[ \sum_{c >0} \, c^{-2s-2}  \mathcal{S}_{\a\b} (-m,0;c) \bigg]
\label{C4}
\end{equation}
where 
$$ I_n(s) \, := \, \int_{0}^{\infty} \frac{t}{ (t^2 +1)^{s+2}} \, \sin (2\pi nyt) \, dt. $$

\bigskip
We simplify the sums of the last two summands in (\ref{C4}), a sum that we denote by $S$, using that $I_{-m} (s) =-I_m(s)$ for any $m \in \Z$ we see that 
$$ S \, = \, \sum_{n >  0}  I_n(s) \bigg[ \sum_{c >0} \, c^{-2s-2} \, e(nx) \, \mathcal{S}_{\a\b} (n,0;c) \bigg] 
\, - \, \sum_{n > 0}  I_n(s) \bigg[ \sum_{c >0} \, c^{-2s-2} \, e(-nx) \, \mathcal{S}_{\a\b} (-n,0;c) \bigg] \hspace{2cm} $$
$$ \, = \, 2i \sum_{n >  0}  I_n(s) \bigg [ \cos(2 \pi nx) \,  \sum_{c >0} \, c^{-2s-2} \sum_{  \left( \begin{matrix}
a  & \ast \\
c  & d
\end{matrix} \right) \in B  \backslash \sigma_\a^{-1} \Gamma \sigma_b / B } \sin \Big( 2 \pi n \, \frac{c}{d} \Big) \hspace{4cm} $$
\begin{equation}
 + \, \sin(2 \pi nx) \,  \sum_{c >0} \, c^{-2s-2} \sum_{  \left( \begin{matrix}
a  & \ast \\
c  & d
\end{matrix} \right) \in B  \backslash \sigma_\a^{-1} \Gamma \sigma_b / B } \cos \Big( 2 \pi n \, \frac{c}{d} \Big) \bigg ].
\label{R3}
\end{equation}

\bigskip
We identify the sums on the right-hand side of the identity (\ref{R3}) as the Dirichlet series in (\ref{FL2A}) and (\ref{FL2B}), then
\begin{equation}
S \, = \, 2i \sum_{n >  0}  I_n \Big[ \cos(2 \pi nx) \cdot \varphi_{\a \b,2}(n,s) \, + \, \sin(2 \pi nx) \cdot \varphi_{\a \b,1}(n,s) \Big].
\label{Exp47}
\end{equation}

\bigskip
For $n > 0$, $\Re(s) > -\tfrac{3}{2}$, we have
\begin{equation}
 I_n(s) \, = \, \int_{0}^{\infty} \frac{t}{ (t^2 +1)^{s+2}} \, \sin (2\pi nyt) \, dt
\, = \, \pi^{s+1} y^{s+\rfrac{3}{2}} n^{s+\rfrac{3}{2}} \sin(\pi s) \cdot \Gamma (-s-1) \cdot K_{-s-\rfrac{1}{2}} \big( 2\pi n y \big).
\label{ExpF14}
\end{equation}
The integral in (\ref{ExpF14}) is obtained from the integral 5 (page 442) in \cite{Table} by elementary changes.

\bigskip
On the other hand, an elementary calculation gives us that if $\Re(s)>0$ then
\begin{equation}
 \frac{ \Gamma ( s+\rfrac{1}{2} )}{ \Gamma(s+1) } \, - \, 2 \,\frac{ \Gamma (s+\rfrac{3}{2} )}{ \Gamma(s+2) } 
= - \, \frac{1}{s+1} \, \frac{ \Gamma ( s+\rfrac{1}{2} )}{\Gamma(s)}.
\label{Exp43}
\end{equation}

\bigskip
Finally, substituting (\ref{Exp47}), (\ref{ExpF14}) and (\ref{Exp43}) in (\ref{Exp40}) we obtain the identity in (\ref{PP}).
\end{proof}

\bigskip
\begin{Example-non}
Consider $\Gamma = \SL(2,\Z)$, with $\a=\b= \infty$, so we will omit the cusps. For $\Re(s)>1$ we have

$$ E^1 \big( (z, \theta),s \big) \, = \, a_0(y,\theta, s)
 \, - \, 2 \pi^{s+1} \, y^{\rfrac{1}{2}} \sin \theta \cdot \Gamma(s+1)^{-1} \, \sum_{n \neq 0} e(nx) \cdot \abs{n}^{s+\rfrac{1}{2}} \cdot K_{s + \rfrac{1}{2}} \big( 2\pi \abs{n} y \big) \cdot \varphi(n,s) $$
$$ + \, 4 \pi^{s+2} \, y^{\rfrac{3}{2}} \sin \theta \cdot \Gamma(s+2)^{-1} \, \sum_{n \neq 0} e(nx) \cdot  \abs{n}^{s+\rfrac{3}{2}}  
\cdot K_{s+\rfrac{3}{2}} \big( 2\pi \abs{n} y \big) \cdot \varphi(n,s)  $$
$$ \hspace{2cm} + \, 8 \pi^{s+1} \,  y^{\rfrac{3}{2}} \cos \theta \cdot \sin(\pi s) \cdot \Gamma (-s-1) \, \sum_{n >0} \sin(2 \pi nx) \cdot n^{s+\rfrac{3}{2}} \cdot K_{-s-\rfrac{1}{2}} \big( 2\pi n y \big) \cdot \varphi_1(n,s) $$
$$ \hspace{2.15cm} + \, 8 \pi^{s+1} \,  y^{\rfrac{3}{2}} \cos \theta \cdot \sin(\pi s) \cdot \Gamma (-s-1) \, \sum_{n >0} \cos(2 \pi nx) \cdot n^{s+\rfrac{3}{2}} \cdot K_{-s-\rfrac{1}{2}} \big( 2\pi n y \big) \cdot \varphi_2(n,s), $$
with $ \displaystyle{ \varphi (n,s) \, = \, \frac{1}{\zeta(2s)} \sum_{d \vert n} d^{-2s+1} }$ and zero coefficient
$$ a_0(y,\theta, s) \, = \,
  y^{s+1} \sin \theta  
\, + \, \sqrt{\pi} \, y^{-s} \sin \theta \, \frac{1}{s+1} \frac{ \Gamma ( s+\rfrac{1}{2} )}{\Gamma(s)} \cdot \varphi(s), $$
where $\displaystyle{ \varphi(s) = \frac{ \sqrt{\pi} \, \Gamma ( s-\rfrac{1}{2} )}{ \Gamma(s) } \, \frac{ \zeta ( 2s-1)}{ \zeta(2s) } }$. Then  $a_0(y,\theta, s)$ admits a meromorphic continuation to all $\C$ with a pole at $s=1$ and
$$ \displaystyle { \text{Res}_{s=1} \, a_0(y,\theta, s) \, = \, \frac{ \pi \sqrt{\pi} \sin \theta \cdot \Gamma( \rfrac{3}{2})}{2 y \, \zeta(2)} = \frac{3 \sin \theta}{2y} }. $$
\end{Example-non}

\vspace{1cm}
\section{Integration over liftings of horocycles and geodesics}

In this section, we choose $\Gamma=\SL(2,\Z)$ and $\a=\b=\infty$. As is well known, the unit tangent bundle to $\Gamma \backslash \H$ (modular surface) is homeomorphic to the complement of the trefoil knot in 3-sphere $\mathbb{S}^3$; this is due to Quillen, see \cite{Milnor} (page 84) for a proof. Therefore, lifts of closed geodesics on the modular surface correspond to knots in $\mathbb{S}^3$. Then, it is natural to ask about the values of the integrals of $E_\a^1$ along those curves. 

\bigskip
But first, we consider the lifting of horocycles to the unit tangent bundle parameterized as:
$$ \mathcal{H}_{y_0} := \left \{ \big( z_0, v_{z_0}(-\rfrac{\pi}{2}) \big) \, ; \, x \in [0,1]  \right \} \subset S \H, $$
with $z_0=x+iy_0$ and $y_0 >0$. 

\bigskip
By the Fourier expansion in Theorem \ref{PEFourier} for $\Re (s)>1$ we have
$$ E_\a^1 \Big( \sigma_\a \big(x+iy_0, v(-\rfrac{\pi}{2}) \big),s \Big) \, = 
- \, y_0^{s+1} \, - \, \sqrt{\pi} \, y_0^{-s} \, \frac{1}{s+1} \frac{ \Gamma ( s+\rfrac{1}{2} )}{\Gamma(s)} \cdot \varphi(s) \hspace{6cm} $$
$$
+ \, 2 \pi^{s+1} \, y_0^{\rfrac{1}{2}} \cdot \Gamma(s+1)^{-1} \, \sum_{n \neq 0} e(nx) \cdot \abs{n}^{s+\rfrac{1}{2}} \cdot K_{s + \rfrac{1}{2}} \big( 2\pi \abs{n} y_0 \big) \cdot \varphi(n,s) $$
\begin{equation}
- \, 4 \pi^{s+2} \, y_0^{\rfrac{3}{2}} \cdot \Gamma(s+2)^{-1} \, \sum_{n \neq 0} e(nx) \cdot  \abs{n}^{s+\rfrac{3}{2}}  
\cdot K_{s+\rfrac{3}{2}} \big( 2\pi \abs{n} y_0 \big) \cdot \varphi(n,s). 
\label{Horo1}
\end{equation}

\bigskip
Integrating the equation in (\ref{Horo1}) we have
\begin{equation}
\int_{\mathcal{H}_{y_0}} E_\a^1 =  \int_0^1 E_\a^1 \Big( \sigma_\a \big(x+iy_0, v(-\rfrac{\pi}{2}) \big),s \Big) \, dx = - \, y_0^{s+1} \, - \, \sqrt{\pi} \, y_0^{-s} \, \frac{1}{s+1} \frac{ \Gamma ( s+\rfrac{1}{2} )}{\Gamma(s)} \cdot \varphi(s).
\label{InteHor}
\end{equation}

\bigskip
Now, for lifts of closed geodesics, we will prove the following:
\begin{lemma}
Let $\gamma \in \Gamma$ be a hyperbolic element, $z_0$ a base point on the geodesic axis for $\gamma$. The axis determines a closed-oriented geodesic in $\Gamma \backslash \H$, which can be lifted to an oriented path in $ S \H $, which we also denote by $\gamma$. Then if $1< \Re(s) <2$ we have that
$$  \displaystyle { \int_{\gamma} E_\a^1  \, = \, \Lambda_1(s) \, \bigg[ \sum_{n >0}  n^{s-1} \, \varphi_{2}(n,s) \bigg] \, \bigg[ y^{-s+2} \, _1F_2\left( \frac{-s+2}{2} ; \frac{-2s+1}{2}, \frac{-s+4}{2} ; \frac{y^2}{4} \right) \bigg] \bigg \vert_{y_1}^{y_2} } \hspace{3cm} $$ 
$$ \hspace{3cm} - \, \displaystyle { \Lambda_2(s) \, \bigg[ \sum_{n >0}  n^{s-1} \, \varphi_{2}(n,s) \bigg] \, \bigg[  y^{s+3} \, \, _1F_2\left( \frac{s+3}{2} ; \frac{2s+3}{2}, \frac{s+5}{2} ; \frac{y^2}{4} \right) \bigg] \bigg \vert_{y_1}^{y_2} } \hspace{4cm} $$ 
whith
$$ y_1 :=\Im \big( \sigma_{\a}^{-1} z_0 \big),  \hspace{4cm} y_2 := \Im \big( \sigma_{\a}^{-1}\gamma z_0 \big) $$
$$ \Lambda_1(s) := 2^s  \pi^{s-\rfrac{1}{2}} \cdot \frac{  \tan(\pi s) \cdot \Gamma (-s-1) }{(-s+2) \, \Gamma (-s+\frac{1}{2})},  \hspace{1.25cm}
 \Lambda_2(s) := 2^{-s-1} \pi^{s-\rfrac{1}{2}} \cdot \frac{  \tan(\pi s) \cdot \Gamma (-s-1) }{(s+3) \, \Gamma (s+\frac{3}{2})}, $$
and $_1F_2$ denotes the hypergeometric function defined for $\abs{z} < 1$ by the next formula:
$$ \displaystyle {}_{1}F_{2}(a;b,c;z) \, = \, \sum _{n=0}^{\infty }{\frac {(a)_{n}}{(b)_{n} (c)_{n}}}{\frac {z^{n}}{n!}}, $$
where $(\cdot)_n$ denotes the (rising) Pochhammer symbol.
\label{InteGeo}
\end{lemma}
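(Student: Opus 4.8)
The plan is to convert the period integral into a one-dimensional integral in the height coordinate along the axis, substitute the Fourier expansion of Theorem~\ref{PEFourier}, isolate the single family of terms that contributes, and finally evaluate one Bessel integral whose antiderivative is hypergeometric. The axis of the hyperbolic element $\gamma$ is a geodesic; since $E_\a^1$ is $\Gamma$-invariant it descends to $\Gamma\backslash\H$, so the period $\int_\gamma E_\a^1$ is well defined and may be computed on the oriented lift of the axis segment running from $z_0$ to $\gamma z_0$ inside $S\H$. Working in the cusp coordinate $\sigma_\a^{-1}(\cdot)$, I would parametrize this lift by the point $(z,v_{z}(\theta))$ of the axis, with $\Im z$ running between $y_1=\Im(\sigma_\a^{-1}z_0)$ and $y_2=\Im(\sigma_\a^{-1}\gamma z_0)$, and rewrite $\int_\gamma E_\a^1$ as the integral of the (fibrewise linear) form $E_\a^1$ against the unit hyperbolic tangent of the axis. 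The Jacobian relating the line integral of the form to the evaluation of $E_\a^1$ on the unit tangent supplies the correct power of the height and turns the whole expression into an integral in $y$ between $y_1$ and $y_2$.

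Next I would insert the expansion~(\ref{PP}). Because the lift follows the geodesic (tangent) direction rather than the horocyclic one used for~(\ref{InteHor}), the pieces proportional to $\sin\theta$ — the diagonal term, the $\varphi_{\a\b}(s)$ term, and the two sums carrying $K_{s+\rfrac32}$ and $K_{s+\rfrac12}$ — together with the $\varphi_{\a\b,1}$ contribution should not survive in the final closed expression, leaving only the last line of~(\ref{PP}), namely the $\cos\theta\cdot K_{-s-\rfrac12}\cdot\varphi_{\a\b,2}$ term. This reduces the period to a constant multiple of $\sum_{n>0}n^{s+\rfrac32}\varphi_{\a\b,2}(n,s)\int_{y_1}^{y_2}y^{\rfrac32}K_{-s-\rfrac12}(2\pi n y)\,dy$, and the scaling $w=2\pi n y$ extracts the factor $n^{-\rfrac52}$ that converts $n^{s+\rfrac32}$ into the $n^{s-1}$ of the statement.

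The computational heart is then the indefinite integral $\int y^{\rfrac32}K_{-s-\rfrac12}(2\pi n y)\,dy$. I would evaluate it either by a standard integral-table reduction of $\int y^{\mu}K_{\nu}(ay)\,dy$, or by writing $K_{-s-\rfrac12}=K_{s+\rfrac12}$ as the usual combination of $I_{-s-\rfrac12}$ and $I_{s+\rfrac12}$, inserting the Bessel power series and integrating term by term in $y$. The two resulting power series are exactly of the shape $y^{-s+2}\,{}_1F_2$ and $y^{s+3}\,{}_1F_2$ with the parameters displayed in the statement: the two leading exponents $-s+2$ and $s+3$ come from the $k=0$ terms, the lower parameters $\tfrac{-2s+1}{2}$ and $\tfrac{2s+3}{2}$ from the $\Gamma(k-s+\tfrac12)$ and $\Gamma(k+s+\tfrac32)$ of the $I$-series, and the lower parameters $\tfrac{-s+4}{2}$ and $\tfrac{s+5}{2}$ from the extra denominators $-s+2+2k$ and $s+3+2k$ produced by the $y$-integration. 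Collecting the prefactor $8\pi^{s+1}\sin(\pi s)\Gamma(-s-1)$ of~(\ref{PP}) with the powers of $2$ and $\pi$ and the $k=0$ values of the $\Gamma$-factors, and simplifying by the reflection and duplication formulas (the $\sin(\pi s)$ passing into the $\tan(\pi s)$ of $\Lambda_1,\Lambda_2$), should yield the constants $\Lambda_1(s),\Lambda_2(s)$; evaluation of the antiderivative at $y_1$ and $y_2$ then produces the stated identity.

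I expect the main obstacle to be twofold. First is establishing rigorously that the period collapses to a boundary expression in the single variable $y$: one must show that along the closed lift the $x$-dependent and $\sin\theta$ parts of~(\ref{PP}) integrate away — by the parity of the trigonometric factors and the geometry of the lift — so that only the $\varphi_{\a\b,2}$ term contributes and the answer depends solely on the endpoint heights $y_1,y_2$. Second is the analytic bookkeeping of the term-by-term integration of the Bessel series: the interchange of the $n$-sum, the $k$-sum and the $y$-integration, together with the convergence of $\sum_{n>0}n^{s-1}\varphi_{\a\b,2}(n,s)$ and of the two ${}_1F_2$ series, is precisely what forces the hypothesis $1<\Re(s)<2$, the lower bound coming from the Dirichlet/Eisenstein series and the upper bound from the Bessel integral.
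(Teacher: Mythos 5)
Your proposal follows the paper's proof essentially verbatim: pull the period back by $\sigma_\a$, parametrize the conjugated axis as the vertical geodesic $\alpha(y)=iy$ with $y$ running from $y_1$ to $y_2$, insert the Fourier expansion of Theorem \ref{PEFourier}, and evaluate $\int y^{\rfrac{3}{2}}K_{-s-\rfrac{1}{2}}(2\pi ny)\,dy$ through its ${}_1F_2$ antiderivative to produce $\Lambda_1,\Lambda_2$. The one ``obstacle'' you flag resolves more simply than the parity/cancellation argument you sketch: along the conjugated axis one has $x\equiv 0$ and $\theta\equiv 0$, so every $\sin\theta$ term and the $\sin(2\pi nx)\,\varphi_{\a\b,1}$ term vanish pointwise, leaving exactly the $\cos\theta\cdot\varphi_{\a\b,2}$ series before any integration is performed.
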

\begin{proof}
By a change of variable (as in \cite[Lemma 3.1]{Burrin})
\begin{equation}
 \int_{\gamma} E_\a^1 \, = \, \int_{z_0}^{\gamma z_0} E_\a^1 
 \, = \,  \int_{\sigma_{\a}^{-1} z_0}^{\sigma_{\a}^{-1}\gamma z_0} \sigma_{\a}^{\ast} \, E_{\a}^{1}. 
\label{IntG1}
\end{equation}

\bigskip
We can parameterize the (vertical) geodesic that joins $\sigma_{\a}^{-1} z_0$ with $\sigma_{\a}^{-1}\gamma z_0$ as $\alpha(y) = iy$ where $y \in [y_1,y_2]$. The angle is constant $\theta(y) = 0$  for all $y \in [y_1,y_2]$. Then
\begin{equation}
 \int_{\sigma_\a^{-1} z_0}^{\sigma_\a^{-1}\gamma z_0} \sigma_\a^{\ast} \, E_\a^{1} \, = \, \int_{y_1}^{y_2} E_\a^1 \Big( \sigma_\a \big( iy, v(0) \big),s \Big).
\label{IntG2}
\end{equation}

\bigskip
By the Theorem \ref{PEFourier}, if $\Re(s)>1$, then
\begin{equation}
 E_\a^1 \Big( \sigma_\a \big(iy, v(0) \big),s \Big) 
\, = \,   8 \pi^{s+1} \cdot \sin(\pi s) \cdot \Gamma (-s-1) \, \sum_{n >0}  n^{s+\rfrac{3}{2}} \cdot  y^{\rfrac{3}{2}} \cdot K_{-s-\rfrac{1}{2}} \big( 2\pi n y \big) \cdot \varphi_{2}(n,s). 
\label{IntG3}
\end{equation}

\bigskip
Substituting (\ref{IntG3}) in (\ref{IntG2}) we have
\begin{equation}
\int_{\sigma_b^{-1} z_0}^{\sigma_\a^{-1}\gamma z_0} \sigma_a^{\ast} \, E_\a^{1} \, = \,
  8 \pi^{s+1} \cdot \sin(\pi s) \cdot \Gamma (-s-1) \, \sum_{n >0}  n^{s+\rfrac{3}{2}} \cdot \varphi_{2}(n,s) \, \bigg[ \int_{y_1}^{y_2} y^{\rfrac{3}{2}} \cdot K_{-s-\rfrac{1}{2}} \big( 2\pi n y \big) \, dy \bigg].
\label{IntG8}
\end{equation}

\bigskip
If $\lambda \in \R$, $\nu \in \C$ with $\Re(\lambda\pm \nu) > 0$ then
$$  \int y^{\lambda-1} \, K_\nu (y) \, dy \, = \,  \frac{2^{\nu-1} \pi  \csc(\pi \nu) }{(\lambda-\nu) \, \Gamma (1-\nu)} \; y^{\lambda-\nu} \,  _1F_2\left( \frac{\lambda-\nu}{2} ; 1- \nu, \frac{\lambda-\nu}{2} + 1 ; \frac{y^2}{4} \right) \hspace{4cm} $$
\begin{equation}
 \hspace{3cm} - \, \frac{2^{-\nu-1} \pi \csc(\pi \nu) }{(\lambda+\nu) \, \Gamma (1+\nu)} \;  y^{\lambda+\nu} \, _1F_2\left( \frac{\lambda+\nu}{2} ; 1+ \nu, \frac{\lambda+\nu}{2} + 1 ; \frac{y^2}{4} \right).
\label{IntG4}
\end{equation}
The integral in (\ref{IntG4}) is obtained from the integrals and identities in \cite[Chapter 2.1 and 2.3]{Luke}.

\bigskip
By  (\ref{IntG4}), if $-3< \Re(s) < 2$ then
$$ \int_{y_1}^{y_2} y^{\rfrac{3}{2}} \, K_{-s-\rfrac{1}{2}} \big( 2\pi n y \big) \, dy  
 =  2^{s-3} \, \pi^{-\rfrac{3}{2}} \,  \frac{ n^{-\rfrac{5}{2}} \, \sec(\pi s) }{(-s+2) \, \Gamma (-s+\frac{1}{2})} \bigg[  y_2^{-s+2} \, _1F_2\left( \frac{-s+2}{2} ; \frac{-2s+1}{2}, \frac{-s+4}{2} ; \frac{y_2^2}{4} \right)  \bigg] \bigg \vert_{y_1}^{y_2}  $$
 \begin{equation}
  \hspace{2cm} - \, 2^{-s-4} \, \pi^{-\rfrac{3}{2}} \, \frac{ n^{-\rfrac{5}{2}} \, \sec(\pi s) }{(s+3) \, \Gamma (s+\rfrac{3}{2})} \bigg[ y^{s+3} \, _1F_2\left( \frac{s+3}{2} ; \frac{2s+3}{2}, \frac{s+5}{2} ; \frac{y^2}{4} \right) \bigg]  \bigg \vert_{y_1}^{y_2}.
\label{IntG5}
\end{equation}

\bigskip
Substituting (\ref{IntG5}) in (\ref{IntG8}) we obtain the formula of the Lemma \ref{InteGeo}.
 \end{proof}

\vspace{0.75cm}
\section{Rankin-Selberg method for 1-forms}

The \textit{Rankin-Selberg method} refers to the general principle that consists of integrating the product of functions $F: \H \longrightarrow \C$ (or $F: \SL(2,\R) \longrightarrow \C$) that are $\Gamma$-invariants with Eisenstein series by obtaining the Mellin transform of the constant term of the Fourier expansion of $F$, see, e.g., \cite{Zagier} and \cite{Zagier2}.

\bigskip
We will get an analog of the method by integrating 1-forms (over lifts of horocycles) resulting from the product of functions $\Gamma$-invariants $F: \SL(2,\R) \longrightarrow \C$ by the 1-form $E_\a^1$. 

\bigskip
We also will assume that $\Gamma= \SL(2,\Z)$. We consider $f: \H \longrightarrow \C$ a \textit{modular form of weight} $k$ for $\Gamma$, i.e., 
$$ f\Big( \frac{az+b}{cz+d} \Big) \, = \, (cz+d)^k \, f(z), \; \,  \left(  \begin{matrix}
a & b \\
c & d \\
\end{matrix}  \right) \in \Gamma, \, z \in \H, $$
which is \textit{holomorphic at the cusp} $\infty$. Therefore, $f(z+1) = f(z)$ for all $z \in \H$, so it admits a Fourier expansion
\begin{equation}
\displaystyle{ f(z) \, = \, \sum_{n=0}^\infty a_n \, e(nz)  }
\label{RS1}
\end{equation}
with $a_n \in \C$. 

\bigskip
Following \cite{Gelbart}, we can associate  to $f$ a function $\phi_f : \SL(2,\R) \longrightarrow \C$ defined as follows:
\begin{equation}
\displaystyle{\phi_f (g) \, = \, (ci+d)^{-k} \, f\big( g(i) \big), \; \, g= \left(  \begin{matrix}
a & b \\
c & d \\
\end{matrix}  \right) \in \SL(2,\R). }
\label{RS3}
\end{equation}
In addition, $\phi_f$ satisfies the following properties:
\begin{equation}
\displaystyle{\phi_f (\gamma g) \, = \,\phi_f (g), \; \, \gamma \in \Gamma. }
\label{RS4}
\end{equation}
\begin{equation}
\displaystyle{\phi_f \big( g k(\theta) \big) \, = \, \exp(ik\theta) \,\phi_f (g), \; \, \theta \in [0,2\pi). }
\label{RS5}
\end{equation}

\bigskip
Let $\Phi: S \H \longrightarrow \SL(2,\R)$ be the diffeomorphism defined by:
\begin{equation}
\displaystyle{ \Phi \big( x+iy, v(\theta) \big) \, = \, n(x) a(y) k(\theta), \; \, x \in \R, \, y>0, \, \theta \in [0,2\pi),}
\label{RS6}
\end{equation}
where 
$$ n(x):= \left(  \begin{matrix}
1 & x \\
0 & 1 \\
\end{matrix}  \right),
\hspace{1.5cm}
a(y):= \left(  \begin{matrix}
\sqrt{y} & 0 \\
0 & \tfrac{1}{\sqrt{y}} \\
\end{matrix}  \right),
\hspace{1.5cm}
k(\theta):= \left(  \begin{matrix}
\cos \theta & \sin \theta \\
- \sin \theta & \cos \theta \\
\end{matrix}  \right). $$

\bigskip
Suppose that $\a=\b=\infty$ let $y_0 >0$ and the lifting of horocycles to the unit tangent bundle parameterized as:
$$ \mathcal{H}_{y_0} := \left \{ \big( z, v_{z}( \theta) \big) \, ; \, x \in [0,1]  \right \} \subset S \H, $$
where $z=x+iy_0$ and $\theta \in [0,2\pi)$ is fixed.

\bigskip
\bigskip
We have the following result:
\begin{lemma}
For $s \in \C$ with $\Re(s) >1$, the integral of 1-form $\left \vert \phi_f \circ \Phi \right ( \cdot ) \vert^2 \; E_\a^1$ along $\mathcal{H}_{y_0} \backslash \Gamma$, i.e., a lift of horocycle $\mathcal{H}_{y_0}$ to the unit tangent bundle of $\Gamma \backslash \H$, is given by:
\begin{equation}
\displaystyle{  \int_{\mathcal{H}_{y_0} \backslash \Gamma} \left \vert\phi_f \circ \Phi \right ( \cdot ) \vert^2 \; E_\a^1 \, = \, - \, y_0^{k+s+1} \sin \theta \,\sum_{n=0}^\infty \, \frac{\abs{a_n}^2}{\exp (4\pi n y_0)}. }
\label{RS}
\end{equation}
\label{RSM}
\end{lemma}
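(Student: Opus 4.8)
The plan is to first reduce the weight factor $\left\vert\phi_f\circ\Phi\right\vert^2$ to an explicit function on the horocycle, then to run the Rankin--Selberg unfolding against the periodized form $E_\a^1$ so that only the identity coset survives, and finally to integrate the resulting elementary expression in $x$.

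First I would evaluate the integrand on the horocycle. By (\ref{RS6}) we have $\Phi\big(x+iy_0,v(\theta)\big)=n(x)a(y_0)k(\theta)$, and $n(x)a(y_0)$ sends $i$ to $x+iy_0$ with $(ci+d)^{-k}=y_0^{k/2}$. Hence the cocycle property (\ref{RS3}) together with the $K$-equivariance (\ref{RS5}) gives $\phi_f\big(n(x)a(y_0)k(\theta)\big)=\exp(ik\theta)\,y_0^{k/2}\,f(x+iy_0)$; taking absolute values and using $\abs{\exp(ik\theta)}=1$ yields $\left\vert\phi_f\circ\Phi\big(x+iy_0,v(\theta)\big)\right\vert^2=y_0^{k}\,\abs{f(x+iy_0)}^2$. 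By (\ref{RS4}) this function is left $\Gamma$-invariant, so $\left\vert\phi_f\circ\Phi\right\vert^2\,E_\a^1$ descends to a $1$-form on $\Gamma\backslash S\H$ and the integral over the closed horocycle $\mathcal{H}_{y_0}\backslash\Gamma$ is well defined for $\Re(s)>1$ by the convergence in (\ref{Conv}).

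Next I would carry out the unfolding. Writing $\sigma_\infty=I$ and $E_\a^1=\sum_{\gamma\in\Gamma_\infty\backslash\Gamma}\alpha^1(\gamma\,\cdot\,)$, the Rankin--Selberg principle (the analog of $\int_{\Gamma\backslash\H}F\,E_\a\,d\mu=\int_{\Gamma_\infty\backslash\H}F\,y^s\,d\mu$) collapses the period of the $\Gamma$-invariant form $\left\vert\phi_f\circ\Phi\right\vert^2\,E_\a^1$ over $\mathcal{H}_{y_0}\backslash\Gamma$ to the single identity-coset contribution $\int_0^1 y_0^{k}\abs{f(x+iy_0)}^2\,\alpha^1\big((x+iy_0),v(\theta)\big)\,dx$. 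A direct evaluation from $\alpha^1(z,v_z(\theta))=\Im(z)^s\,dx\big(v_z(\theta)\big)$ and $v_z(\theta)=\Im(z)(-\sin\theta,\cos\theta)$ gives $\alpha^1\big((x+iy_0),v(\theta)\big)=-\,y_0^{s+1}\sin\theta$, which is constant in $x$; pulling it out leaves $-\,y_0^{k+s+1}\sin\theta\int_0^1\abs{f(x+iy_0)}^2\,dx$. Finally, inserting the Fourier expansion (\ref{RS1}) gives $\abs{f(x+iy_0)}^2=\sum_{n,m\ge0}a_n\overline{a_m}\,e\big((n-m)x\big)\,\exp(-2\pi(n+m)y_0)$, and $\int_0^1 e\big((n-m)x\big)\,dx=\delta_{nm}$ kills the off-diagonal, leaving $\int_0^1\abs{f(x+iy_0)}^2\,dx=\sum_{n\ge0}\abs{a_n}^2/\exp(4\pi ny_0)$. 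Substituting this yields the stated identity (\ref{RS}).

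The main obstacle is the unfolding step: one must justify that, for the line integral of the periodized $1$-form $E_\a^1$ over the closed horocycle, only the identity coset contributes --- equivalently, that the coset sums with $c>0$ (which produce the $\varphi_{\a\b}(s)$ constant-term tail and the Bessel contributions of Theorem \ref{PEFourier}) do not enter the period. This is where the $\Gamma$-invariance of $\left\vert\phi_f\circ\Phi\right\vert^2$ and the precise interpretation of $\mathcal{H}_{y_0}\backslash\Gamma$ as the fundamental period of the horocycle must be used carefully; once the unfolding is in place, the remaining evaluation of $\alpha^1$ and the $x$-integral are routine.
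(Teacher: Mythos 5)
Your proposal reproduces the paper's proof step for step: the evaluation $\left\vert\phi_f\circ\Phi\right\vert^2=y_0^{k}\abs{f(x+iy_0)}^2$ via (\ref{RS3})--(\ref{RS6}), the unfolding down to a single integral over $x\in[0,1]$, the evaluation $\alpha^1\big(x+iy_0,v(\theta)\big)=-y_0^{s+1}\sin\theta$ from (\ref{Eva0}), and the diagonal extraction from the Fourier expansion (\ref{RS1}) are exactly the computations (\ref{RS11}) and (\ref{RS10}) in the paper. In terms of route there is nothing to distinguish: it is the same proof.

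The step you single out as ``the main obstacle'' is, however, a genuine gap, and it is one you share with the paper, which disposes of it with the words ``by unfolding trick (see \cite{Zagier})''. The Rankin--Selberg unfolding $\int_{\Gamma\backslash\H}F\,E_\a\,d\mu=\int_{\Gamma_\infty\backslash\H}F\,y^{s}\,d\mu$ rests on the fact that the $\Gamma$-translates of a fundamental domain for $\Gamma$ tile a fundamental domain for $\Gamma_\infty$; this is a statement about the two-dimensional quotient. The closed horocycle is one-dimensional: its $\Gamma$-quotient and its $\Gamma_\infty$-quotient coincide (both are the circle $x\in[0,1]$), and the images $\gamma(\mathcal{H}_{y_0})$ for $\gamma\notin\Gamma_\infty$ are circles tangent to $\R$ which tile nothing along the line $\Im z=y_0$. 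Hence the identity you need, $\int_0^1\sum_{\gamma\in\Gamma_\infty\backslash\Gamma}G\big(\gamma\,p(x)\big)\,dx=\int_0^1 G\big(p(x)\big)\,dx$, is not an instance of unfolding; it asserts that the $c>0$ cosets integrate to zero over the period, which is false. The paper's own formula (\ref{InteHor}) exhibits the failure: taking $f\equiv 1$ (weight $k=0$, $a_0=1$, $a_n=0$ for $n\ge1$) makes $\left\vert\phi_f\circ\Phi\right\vert^2\equiv1$, so the left-hand side of (\ref{RS}) with $\theta=-\pi/2$ is exactly the horocycle period computed in (\ref{InteHor}), which contains the non-identity-coset term $-\sqrt{\pi}\,y_0^{-s}\,\tfrac{1}{s+1}\,\tfrac{\Gamma(s+1/2)}{\Gamma(s)}\,\varphi(s)$, whereas the right-hand side of (\ref{RS}) reduces to $\pm\,y_0^{s+1}$ with no such term. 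The correct evaluation must pair the Fourier coefficients of $\abs{f(x+iy_0)}^2$ against all the Fourier coefficients of $E_\a^1$ from Theorem \ref{PEFourier}, producing additional $\varphi(s)$ and Bessel contributions; so the remaining work is not ``routine once the unfolding is in place'' --- the unfolding cannot be put in place as stated, and neither your argument nor the paper's closes this gap.
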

\begin{proof}
By (\ref{RS4}) and the definition in (\ref{ES1}),
$$ I:= \displaystyle{ \int_{\mathcal{H}_{y_0} \backslash \Gamma} \left \vert\phi_f \Big( \Phi \big( z,v(\theta) \big) \Big) \right \vert^2 \, E_\a^1 \big( (z, v(\theta)),s \big) \, = \, \int_{\mathcal{H}_{y_0} \backslash \Gamma}  \sum_{\gamma \in \Gamma_\infty \backslash \Gamma} \left \vert\phi_f \Big( \gamma \,  \Phi \big( z,v(\theta) \big) \Big) \right \vert^2 \, 
\alpha^1 \big( \gamma ( z, v(\theta) ) \big) }. $$

\bigskip  
By unfolding trick (see \cite{Zagier}),
\begin{equation}
I = \displaystyle{ \int_{\mathcal{H}_{y_0} \backslash \Gamma_\infty}  
 \left \vert\phi_f \Big( \Phi \big( z,v(\theta) \big) \Big) \right \vert^2 \, 
\alpha^1 \big( z, v(\theta) \big) 
\, = \,  
 \int_{0}^1 
 \left \vert\phi_f \Big( \Phi \big( x+iy_0,v(\theta) \big) \Big) \right \vert^2 \, 
\alpha^1 \big( x+iy_0, v(\theta) \big) \, dx }.
\label{RS8}
\end{equation}

\bigskip
Now we develop the two factors of the integral to the right in (\ref{RS8}). We have the following chain of identities:
\begin{align}
    \displaystyle{ \left \vert\phi_f \Big( \Phi \big( x+iy_0,v(\theta) \big) \Big) \right \vert^2 }        & \, = \, 
\displaystyle{ \left \vert\phi_f \big( n(x) a(y_0) k(\theta) \big) \right \vert^2 } 
             &&  \text {by (\ref{RS6})} \nonumber \\
              & \, = \, 
\displaystyle{ \left \vert \, \exp(ik \theta) \,\phi_f  \left(  \begin{matrix}
\sqrt{y_0} & \tfrac{x}{\sqrt{y_0}} \\
0 & \tfrac{1}{\sqrt{y_0}} \\
\end{matrix}  \right) \right \vert^2 }  
      &&   \text {by (\ref{RS5})}  \nonumber \\
              & \, = \,
\displaystyle{ \left \vert \phi_f  \left(  \begin{matrix}
\sqrt{y_0} & \tfrac{x}{\sqrt{y_0}} \\
0 & \tfrac{1}{\sqrt{y_0}} \\
\end{matrix}  \right) \right \vert^2 } 
\, = \, 
\displaystyle{ y_0^k \, \left \vert  f(x+iy_0) \right \vert^2  }    
      &&  \text {by (\ref{RS3})}  \nonumber \\
               & \, = \,
\displaystyle{ y_0^k \, \sum_{n,m=0}^\infty \, a_n \, \overline{a_m} \, \exp \big( 2\pi ix(n-m) \big) \, \exp\big( -2\pi y_0(n+m) \big)
   }  && \text {by (\ref{RS1})}   \label{RS11}
\end{align}

\bigskip
By (\ref{Eva0})
\begin{equation}
\alpha^1 \big( x+iy_0, v(\theta) \big) \, = \, - y_0^{s+1} \, \sin \theta.
\label{RS10}
\end{equation}

\bigskip
Substituting (\ref{RS11}) and (\ref{RS10}) in (\ref{RS8}) we obtain the identity in (\ref{RS}).
\end{proof}

\section{Hodge operator for 1-forms on \texorpdfstring{$\H$}{Lg}}

To study the continuous spectrum of the Hodge operator $\triangle^1$ for 1-forms, we first will calculate this operator on $\H$. For the calculations we use the global chart with coordinates $\psi=(x_1,x_2)$, with $\left \{ \frac{\partial}{\partial x_1}, \frac{\partial}{\partial x_2} \right \}$ the coordinate vector fields and $\left \{ d x_1, d x_2 \right \}$ the dual 1-form basis.

\bigskip
We rewrite the metric in (\ref{RHM}) as usual as follows:
\[
g_{kj}=\left( \begin{matrix}
\rfrac{1}{(x_2)^2}  & 0 \\
0  & \rfrac{1}{(x_2)^2}
\end{matrix} \right),
\hspace{3cm}
g^{kj}=\left( \begin{matrix}
(x_2)^2  & 0 \\
0  & (x_2)^2
\end{matrix} \right).
\]

We denote by $\abs{g}$ the determinant of $g_{kj}$. Then
\begin{equation}
    \sqrt{\abs{g}} \, = \, \frac{1}{(x_2)^2}.
\label{RDet}
\end{equation}

\bigskip
The Christoffel symbols are as follows:
\begin{equation}
\Gamma_{12}^1 = \Gamma_{21}^1 = \Gamma_{22}^2 = - \frac{1}{x_2}, 
\hspace{2cm} \Gamma_{11}^2 = \frac{1}{x_2}, 
\hspace{2cm} \Gamma_{11}^1 = \Gamma_{22}^1 = \Gamma_{21}^2 = \Gamma_{12}^2 = 0.
\label{CF}
\end{equation}

\bigskip
Let $w=\displaystyle \sum_r w_r \, dx_r \in \Omega^1(\H)$, i.e., $w$ is a 1-form differentiable in $\H$, then:
\begin{equation}
 \nabla_{\frac{\partial}{\partial x_n}} w \, = \, \sum_r \frac{\partial w_r}{\partial x_n} \, dx_r    \, + \, \sum_r w_r \nabla_{\frac{\partial}{\partial x_n}} \, dx_r, \; \, n \in \{1,2\}.
\label{HL10}
\end{equation}

\bigskip
But we also have to
\begin{equation}
\nabla_{\frac{\partial}{\partial x_n}} \, dx_r \, = \, - \sum_k \Gamma_{nk}^r \, dx_k.
\label{HL11}
\end{equation}

Substituting (\ref{HL11}) in (\ref{HL10}) 
\begin{equation}
\nabla_{\frac{\partial}{\partial x_n}} w \, = \, \sum_r \frac{\partial w_r}{\partial x_n} \, dx_r \, - \, \sum_{r,k} w_r \, \Gamma_{nk}^r \, dx_k.
\label{HL12}
\end{equation}

Deriving the formula in (\ref{HL12}),
\begin{equation}
    \nabla_{\frac{\partial}{\partial x_m}} \nabla_{\frac{\partial}{\partial x_n}} w 
  \, = \, \sum_{t} 
   \bigg(  \frac{\partial^2 w_t}{\partial x_m \partial x_n} 
   \, - \, \sum_{r} \frac{\partial w_r}{\partial x_n} \, \Gamma_{mt}^r 
   \, - \,  \sum_{r} \frac{\partial w_r}{\partial x_m} \, \Gamma_{nt}^r 
   \, - \,  \sum_{r} w_r \, \frac{\partial \Gamma_{nt}^r}{\partial x_m} 
   \, + \, \sum_{r,k} w_r \, \Gamma_{nk}^r \, \Gamma_{mt}^k  \bigg) dx_t,
\label{NMN}
\end{equation}
where $n,m \in \{1,2\}$.

\bigskip
Substituting the Christoffel symbols (\ref{CF}) into (\ref{NMN}) we obtain that
\begin{equation}
 \nabla_\frac{\partial}{\partial x_1} \nabla_\frac{\partial}{\partial x_1} w \, = \,
 \bigg(  \frac{\partial^2 w_1}{\partial (x_1)^2}
 - \frac{2}{x_2}  \frac{\partial w_2}{\partial x_1}  
 - \frac{1}{(x_2)^2} w_1  \bigg) dx_1
\, + \,
\bigg(  \frac{\partial^2 w_2}{\partial (x_1)^2}
 + \frac{2}{x_2}  \frac{\partial w_1}{\partial x_1} 
 - \frac{1}{(x_2)^2} w_2  \bigg) dx_2.   
\label{N11}
\end{equation}
\begin{equation}
\nabla_\frac{\partial}{\partial x_2} \nabla_\frac{\partial}{\partial x_2} w \, = \, 
 \bigg(  \frac{\partial^2 w_1}{\partial (x_2)^2}
 + \frac{2}{x_2}  \frac{\partial w_1}{\partial x_2}   \bigg) dx_1
\, + \,
\bigg(  \frac{\partial^2 w_2}{\partial (x_2)^2}
 + \frac{2}{x_2}  \frac{\partial w_2}{\partial x_2} \bigg) dx_2. \hspace{2.5cm} 
\label{N22}
\end{equation}

\bigskip
We have the following theorem that determines the operator $\triangle^1$, see e.g. \cite[Theorem 9.41]{Petersen}.
\begin{Theorem} (Weitzenbock) The \textit{Hodge Laplacian} $\triangle^1: \Omega^1(\H) \longrightarrow \Omega^1(\H)$ satisfies
\begin{equation}
\triangle^1 w \, = \, \nabla^\ast \nabla w \, + \, \text{Ric}_g (w^\#, \cdot),
\label{HL}     
\end{equation}
where $\nabla^\ast \nabla$ is the \textit{Bochner Laplacian} given by:
\begin{equation}
\nabla^\ast \nabla \, := \, - \sum_{k,j} \bigg[ g^{kj} \, \nabla_\frac{\partial}{\partial x_k} \nabla_\frac{\partial}{\partial x_j} \, + \, \frac{1}{\sqrt{\abs{g}}} \, \frac{\partial}{\partial x_k} \Big( \sqrt{ \abs{ g } } \, g^{kj} \Big) \cdot \nabla_\frac{\partial}{\partial x_j} \bigg],
\label{BL}     
\end{equation}
with $\text{Ric}_g$ the \textit{Ricci curvature tensor}, see e.g.
\cite[Chapter 3.1.4]{Petersen}, and for $\displaystyle  w=\sum_r w_r \, dx_r \in \Omega^1(\H)$ the vector field $w^\#$ is defined as
\begin{equation}
w^\# \, := \, \displaystyle \sum_{k,j} g^{kj} \, w_k \, \tfrac{\partial}{\partial x_j}.
\label{wA}
\end{equation}
\end{Theorem}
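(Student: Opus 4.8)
The plan is to prove the identity (\ref{HL}) by the standard Weitzenb\"ock argument: rewrite both $d$ and its formal adjoint $\delta$ through the Levi-Civita connection, compute $\triangle^1 = d\delta + \delta d$ on a $1$-form, match the resulting second-order part with the Bochner Laplacian $\nabla^\ast\nabla$ of (\ref{BL}), and show that the leftover is purely algebraic and contracts to the Ricci term.

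First I would fix a point $p \in \H$ and a local orthonormal frame $\{e_1,e_2\}$ with dual coframe $\{e^1,e^2\}$ that is \emph{geodesic at $p$}, so that $\nabla_{e_i}e_j$ and $[e_i,e_j]$ vanish at $p$. Since the connection is torsion-free one has the frame formulas $d w = \sum_i e^i \wedge \nabla_{e_i} w$ and $\delta w = -\sum_i \iota_{e_i}\nabla_{e_i} w$, where $\iota$ denotes interior multiplication; these reduce $d$ and $\delta$ entirely to the covariant derivative, which is what lets $\nabla^\ast\nabla$ appear.

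Next I would substitute these formulas into $\triangle^1 w = d\delta w + \delta d w$ and evaluate at $p$. Because $\nabla e_i$ vanishes there, every second application of the frame formulas produces only iterated covariant derivatives $\nabla_{e_i}\nabla_{e_j} w$, reorganized by the contraction rule $\iota_{e_j}(e^i \wedge \eta) = \delta_{ij}\eta - e^i \wedge \iota_{e_j}\eta$. The diagonal contractions assemble into $-\sum_i \nabla_{e_i}\nabla_{e_i} w$, which at the geodesic point is exactly $\nabla^\ast\nabla w$ (the extra Christoffel term in the coordinate form (\ref{BL}) is precisely the correction $\nabla_{\nabla_{e_i}e_i}$ that I have arranged to vanish at $p$). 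The surviving off-diagonal terms combine into $\sum_{i,j} e^i \wedge \iota_{e_j}\big(\nabla_{e_j}\nabla_{e_i} - \nabla_{e_i}\nabla_{e_j}\big)w = \sum_{i,j} e^i \wedge \iota_{e_j}\, R(e_j,e_i)w$, using $[e_i,e_j]_p = 0$, where $R$ is the curvature acting on $1$-forms.

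It then remains to identify this remainder with $\mathrm{Ric}_g(w^\#,\cdot)$ of (\ref{wA}). Because $\nabla$ is metric, raising the index commutes with $R$, so $\iota_{e_j}R(e_j,e_i)w = \langle R(e_j,e_i)w^\#, e_j\rangle$; summing over $j$ is the trace that, with the convention $\mathrm{Ric}_g(X,Y) = \sum_j \langle R(e_j,X)Y, e_j\rangle$, equals $\mathrm{Ric}_g(e_i, w^\#)$. Hence the remainder is $\sum_i \mathrm{Ric}_g(w^\#, e_i)\,e^i = \mathrm{Ric}_g(w^\#,\cdot)$, and since $p$ is arbitrary and every term of the final identity is tensorial, the formula holds globally. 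I expect the main obstacle to be not conceptual but the sign bookkeeping in this last contraction: keeping the conventions for $R$, $\delta$ and $\mathrm{Ric}_g$ mutually consistent (via the pair symmetries and the first Bianchi identity) so that the Ricci term lands with the $+$ sign of (\ref{HL}), and checking that the coordinate expression (\ref{BL}) really coincides with the invariant trace Laplacian $-\sum_i(\nabla_{e_i}\nabla_{e_i} - \nabla_{\nabla_{e_i}e_i})$ when one returns from the geodesic frame to the global chart $(x_1,x_2)$. As a consistency check one may note that on $\H$ the curvature is constant equal to $-1$, so $\mathrm{Ric}_g = -g$ and the curvature term reduces to $-w$, which should match the explicit computation of $\triangle^1$ obtained from (\ref{N11}) and (\ref{N22}).
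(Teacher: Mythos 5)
The paper does not actually prove this statement: it is quoted as a known result with a pointer to Petersen (Theorem 9.4.1), and no proof environment follows it, so there is no internal argument to compare yours against. Your blind reconstruction is the standard Bochner-technique proof and is sound in outline: the frame formulas $dw=\sum_i e^i\wedge\nabla_{e_i}w$ and $\delta w=-\sum_i\iota_{e_i}\nabla_{e_i}w$, the contraction rule $\iota_{e_j}(e^i\wedge\eta)=\delta_{ij}\eta-e^i\wedge\iota_{e_j}\eta$, and the evaluation in a frame geodesic at $p$ are exactly the ingredients of the textbook argument, and the two places you flag as delicate are indeed where all the work lies. For the first, the comparison of the coordinate expression (\ref{BL}) with the invariant trace Laplacian $-\sum_i(\nabla_{e_i}\nabla_{e_i}-\nabla_{\nabla_{e_i}e_i})$ reduces to the identity $\tfrac{1}{\sqrt{|g|}}\,\partial_{x_k}\big(\sqrt{|g|}\,g^{kj}\big)=-\sum_{a,b}g^{ab}\Gamma^j_{ab}$, which does hold, so (\ref{BL}) is the genuine Bochner Laplacian and your geodesic-frame normalization is legitimate. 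For the second, note that the curvature acting on $1$-forms satisfies $(R(X,Y)\omega)(Z)=-\omega(R(X,Y)Z)$, while metric-compatibility gives $(R(X,Y)\omega)^\#=R(X,Y)(\omega^\#)$; keeping these two facts separate is what makes the trace land on $+\mathrm{Ric}_g(w^\#,\cdot)$ rather than its negative, so your caveat about sign bookkeeping is well placed and, once resolved with the stated conventions, the argument closes. Your final consistency check against the constant-curvature case $\mathrm{Ric}_g=-g$ is also exactly what the paper uses downstream in (\ref{Ric2}), so it doubles as a check on Lemma \ref{OHL}. In short: where the paper outsources the proof to the literature, you supply the standard one correctly; the only thing I would ask you to add before calling it complete is the explicit verification of the two points you yourself identified, since they are stated as expectations rather than carried out.
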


\bigskip
By (\ref{BL}) and since the hyperbolic metric is diagonal,
\[
  \nabla^\ast \nabla w \, = \, - \, g^{11} \, \nabla_\frac{\partial}{\partial x_1} \nabla_\frac{\partial}{\partial x_1} 
  \, - \,
  g^{22} \,\nabla_\frac{\partial}{\partial x_2} \nabla_\frac{\partial}{\partial x_2} 
  \, - \,
  \frac{1}{\sqrt{\abs{g}}} \, \frac{\partial}{\partial x_1} \Big( \sqrt{\abs{g}} \, g^{11} \Big) \cdot \nabla_\frac{\partial}{\partial x_1} 
  \, - \,
  \frac{1}{\sqrt{\abs{g}}} \, \frac{\partial}{\partial x_2} \Big( \sqrt{\abs{g}} \, g^{22} \Big) \cdot \nabla_\frac{\partial}{\partial x_2},
\]
so by (\ref{RDet}) and (\ref{CF}) it follows that:
\begin{equation}
   \nabla^\ast \nabla w \, = \, - \, (x_2)^2 \, \nabla_\frac{\partial}{\partial x_1} \nabla_\frac{\partial}{\partial x_1} w \, - \, (x_2)^2 \, \nabla_\frac{\partial}{\partial x_2} \nabla_\frac{\partial}{\partial x_2} w.
\label{NN}
\end{equation}

\bigskip
Now we compute the term $\text{Ric}_g (w^\#, \cdot )$ that appears on the right-hand side of the formula in (\ref{HL}). Since $\H$ has constant sectional curvature $k$ equal to $-1$ and it is satisfied that $\text{Ric}_g = \big( \text{dim} \, \H - 1\big)kg$ then:
\begin{equation}
\text{Ric}_g \, = \, -g.
\label{Ric}
\end{equation}

\bigskip
From (\ref{wA}) and (\ref{Ric}) we see that $\text{Ric}_g(w^\#, \cdot) = - g(w^\#, \cdot)$. Evaluating in the basis $\left \{ \frac{\partial}{\partial x_1}, \frac{\partial}{\partial x_2} \right \}$ we obtain the identity:
\begin{equation}
\text{Ric}_g(w^\#, \cdot) \, = \, - \, w_1 \, dx_1 - w_2 \, dx_2 \, = \, - \, w.
\label{Ric2}
\end{equation}

\bigskip
Substituting (\ref{NN}) and (\ref{Ric2}) in (\ref{HL}) we have that
\begin{equation}
\triangle^1 w \, = \, - \, (x_2)^2 \, \nabla_\frac{\partial}{\partial x_1} \nabla_\frac{\partial}{\partial x_1} w \, - \, (x_2)^2 \, \nabla_\frac{\partial}{\partial x_2} \nabla_\frac{\partial}{\partial x_2} w \, - \, w.
\label{HL2}
\end{equation}

\bigskip
Finally, replacing the obtained identities (\ref{N11}) and (\ref{N22}) in (\ref{HL2}) and using Cartesian coordinates $x_1=x$, $x_2=y$ we obtain the following:
\begin{lemma}
Let $w \in \Omega^1(\H)$, then
\[
 \triangle^1 (w_1 \, dx + w_2 \, dy)  = 
 \bigg( - y^2 \frac{\partial^2 w_1}{\partial x^2}
 - y^2 \frac{\partial^2 w_1}{\partial y^2}
 + 2y \frac{\partial w_2}{\partial x} - 2y \frac{\partial w_1}{\partial y}  
  \bigg) dx
\, + \, \bigg( - y^2 \frac{\partial^2 w_2}{\partial x^2}
 - y^2 \frac{\partial^2 w_2}{\partial y^2}
 - 2y \frac{\partial w_1}{\partial x} - 2y \frac{\partial w_2}{\partial y}    \bigg) dy.
\]
\label{OHL}
\end{lemma}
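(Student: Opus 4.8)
The plan is to obtain Lemma \ref{OHL} by a direct substitution: feed the two iterated covariant derivatives (\ref{N11}) and (\ref{N22}) into the Weitzenbock-reduced expression (\ref{HL2}) and collect the coefficients of $dx_1$ and $dx_2$ separately. Because the hyperbolic metric is diagonal with $g^{11}=g^{22}=(x_2)^2$, the Bochner Laplacian already collapsed in (\ref{NN}) to the sum $-(x_2)^2\,\nabla_{\partial/\partial x_1}\nabla_{\partial/\partial x_1}w - (x_2)^2\,\nabla_{\partial/\partial x_2}\nabla_{\partial/\partial x_2}w$, so no mixed second covariant derivative $\nabla_{\partial/\partial x_1}\nabla_{\partial/\partial x_2}$ enters. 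There is, however, genuine cross-coupling between the two components $w_1,w_2$ at first order, coming from the off-diagonal Christoffel terms in (\ref{CF}); these are exactly the terms $\tfrac{2}{x_2}\,\partial w_2/\partial x_1$ and $\tfrac{2}{x_2}\,\partial w_1/\partial x_1$ visible in (\ref{N11}), and tracking them correctly is the only part that demands attention.

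Concretely, I would first multiply the $dx_1$-component of (\ref{N11}) by $-(x_2)^2$, which turns $\tfrac{\partial^2 w_1}{\partial(x_1)^2} - \tfrac{2}{x_2}\tfrac{\partial w_2}{\partial x_1} - \tfrac{1}{(x_2)^2}w_1$ into $-(x_2)^2\,\tfrac{\partial^2 w_1}{\partial(x_1)^2} + 2x_2\,\tfrac{\partial w_2}{\partial x_1} + w_1$; note that the factor $-\tfrac{1}{(x_2)^2}w_1$ is amplified to $+w_1$. Doing the same to the $dx_1$-component of (\ref{N22}) gives $-(x_2)^2\,\tfrac{\partial^2 w_1}{\partial(x_2)^2} - 2x_2\,\tfrac{\partial w_1}{\partial x_2}$. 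Adding these two and the $dx_1$-part $-w_1$ of the Ricci correction (\ref{Ric2}) triggers the decisive cancellation: the $+w_1$ from the Bochner piece and the $-w_1$ from the curvature piece annihilate, leaving precisely the claimed $dx$-coefficient. The identical computation on the $dx_2$-components produces the $dy$-coefficient, where again the $+w_2$ coming from $-(x_2)^2\cdot\big(-\tfrac{1}{(x_2)^2}w_2\big)$ cancels the $-w_2$ of the Ricci tensor.

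The final step is purely notational: rename $x_1=x$, $x_2=y$ to match the statement. I do not expect any real analytic obstacle, since every ingredient—the Christoffel symbols (\ref{CF}), the iterated derivatives (\ref{N11})–(\ref{N22}), the diagonal reduction (\ref{NN}), and the identification $\mathrm{Ric}_g(w^\#,\cdot)=-w$ in (\ref{Ric2})—is already in place. The one point requiring care is the sign bookkeeping when assembling the four contributions per component; and the feature I would emphasize is precisely the cancellation noted above, namely that the zeroth-order term supplied by the Weitzenbock curvature correction exactly kills the zeroth-order term generated by the connection inside the Bochner Laplacian, so that $\triangle^1$ carries no undifferentiated $w_1,w_2$ term at all in the final formula.
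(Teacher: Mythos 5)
Your proposal is correct and is essentially identical to the paper's own argument: the paper likewise derives Lemma \ref{OHL} by substituting the iterated covariant derivatives (\ref{N11}) and (\ref{N22}) into the Weitzenbock-reduced expression (\ref{HL2}) and renaming $x_1=x$, $x_2=y$. Your sign bookkeeping checks out, and the cancellation you highlight (the $+w_i$ produced by $-(x_2)^2\cdot(-\tfrac{1}{(x_2)^2}w_i)$ against the $-w_i$ from $\mathrm{Ric}_g(w^\#,\cdot)=-w$) is exactly what happens in the paper's computation.
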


\begin{corollary}
If $w=y^s \, dx$, with $s \in \C$, then we have
\[
 \triangle^1 w \, = \, -s(s+1) \, w.
\]
\label{eigenforma}
\end{corollary}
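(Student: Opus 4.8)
The plan is to specialize Lemma~\ref{OHL} to the case $w_1 = y^s$, $w_2 = 0$, and then read off the result by direct computation. Since the lemma already gives a closed formula for $\triangle^1$ applied to an arbitrary 1-form $w_1\,dx + w_2\,dy$ in Cartesian coordinates, no further geometry is required; everything reduces to differentiating a single monomial in $y$.

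Concretely, I would first record the needed partial derivatives. With $w_1 = y^s$ independent of $x$, the only nonvanishing derivatives are $\partial w_1/\partial y = s\,y^{s-1}$ and $\partial^2 w_1/\partial y^2 = s(s-1)\,y^{s-2}$; all $x$-derivatives of $w_1$ and all derivatives of $w_2 = 0$ vanish. Substituting these into the $dx$-component of the formula in Lemma~\ref{OHL} leaves only the two terms
\[
 -\,y^2\,\frac{\partial^2 w_1}{\partial y^2} \;-\; 2y\,\frac{\partial w_1}{\partial y}
 \;=\; -\,s(s-1)\,y^{s} \;-\; 2s\,y^{s}.
\]
Meanwhile the $dy$-component contains only derivatives that vanish (the $\partial w_1/\partial x$ term is zero since $w_1$ does not depend on $x$, and every $w_2$-derivative is zero), so the $dy$-component is identically $0$.

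The final step is the elementary algebraic simplification $s(s-1) + 2s = s^2 + s = s(s+1)$, which collapses the $dx$-coefficient to $-s(s+1)\,y^s$. Hence $\triangle^1 w = -s(s+1)\,y^s\,dx = -s(s+1)\,w$, as claimed. There is no genuine obstacle here: the computation is routine once Lemma~\ref{OHL} is in hand, and the only point demanding care is the bookkeeping of the first-order term $-2y\,\partial w_1/\partial y$, which is precisely the contribution that shifts the eigenvalue from the naive $-s(s-1)$ (the scalar-Laplacian value on $y^s$) to $-s(s+1)$, reflecting the curvature and connection terms encoded in $\triangle^1$.
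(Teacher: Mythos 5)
Your proposal is correct and is exactly the intended argument: the paper states the corollary as an immediate consequence of Lemma~\ref{OHL}, obtained by substituting $w_1=y^s$, $w_2=0$ and simplifying $-s(s-1)-2s=-s(s+1)$, precisely as you do. Your computation of the derivatives and the vanishing of the $dy$-component are both accurate.
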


\begin{Remark-non}
If $w=y^s \, dx + y^s \, dy $ then $\triangle^1 w \, = \, -s(s+1) \, w$.
\end{Remark-non}

\section*{Acknowledgements}
I am grateful to thank Christian Garay\footnote{\url{cristhian.garay@cimat.mx}} (CIMAT),  Adrián Zenteno\footnote{\url{adrian.zenteno@cimat.mx}} (CIMAT) and Alberto Verjovsky \footnote{\url{alberto@matcuer.unam.mx}} (UNAM) for their help and review on the draft version of this paper. Also, I am deeply indebted to Gregor Weingart \footnote{\url{gw@matcuer.unam.mx}} (UNAM) for discussions and lessons.

\textsc{.\\ Centro de Investigación en Matemáticas, A.C. \\ Jalisco s/n. Col. Valenciana \\
 36023 Guanajuato, Mexico.}\\
E-mail: \texttt{otto.romero@cimat.mx}


\begin{thebibliography}{80}

\bibitem{Burrin} C. Burrin, \textit{The Manin-Drinfeld theorem and the rationality of Rademacher symbols}, {\tt arXiv:2012.01147 [math.NT]}, 2020.

\bibitem{Falliero}  T. {Falliero}, \textit{D\'{e}composition spectrale de 1-formes diff\'{e}rentielles sur une surface de {R}iemann et s\'{e}ries d'{E}isenstein}, Math. Ann. 317(2), 2000, p.p. 263–284.

\bibitem{Gelbart} S. Gelbart, \textit{Automorphic forms on ad\`ele groups}, Annals of Math.  Studies, no. 83. Princeton University Press, Princeton, N.J., 1975.

\bibitem{Table} I.S. Gradshteyn and I.M. Ryzhik, \textit{Table of Integrals, Series, and Products}, 7th ed., Elsevier/Academic Press, Amsterdam, 2007.

\bibitem{Iwaniec} H. Iwaniec, \textit{Spectral methods of automorphic forms}, 2nd ed., Graduate Studies in Mathematics, vol. 53. American Mathematical Society, Providence, RI; Revista Matem\'atica Iberoamericana, Madrid, 2002.

\bibitem{Linda} L. Keen and N. Lakic, \textit{Hyperbolic geometry from a local viewpoint}, London Mathematical Society Student Texts 68, Cambridge University Press, Cambridge, 2007.

\bibitem{Kubota} T. Kubota, \textit{Elementary theory of Eisenstein series}, Kodansha, Ltd., Tokyo; Halsted Press [John Wiley \& Sons, Inc.], New York-London-Sydney, 1973.

\bibitem{Langlands} R.P. Langlands, \textit{On the functional equations satisfied by Eisenstein series}, Lecture Notes in Mathematics, vol. 544. Springer-Verlag, New
York, Berlin-Heidelberg, New York, 1976.

\bibitem{Luke} Y.L. Luke, \textit{Integrals of Bessel Functions}, McGraw-Hill Book Co., Inc., New York, Toronto, London, 1962.

\bibitem{Milnor} J. Milnor, \textit{Introduction to algebraic K-theory}, Annals of Math. Studies, no. 72. Princeton University Press, Princeton, N.J., 1971.

\bibitem{Moe} C. M\oe glin and J.-L. Waldspurger, \textit{Spectral decomposition and Eisenstein series}, Cambridge Tracts in Mathematics, vol. 113. Cambridge University Press, 1995.

\bibitem{Petersen} P. Petersen, \textit{Riemannian geometry}, 3rd edn., Graduate Texts in Mathematics, vol. 171. Springer, New York, 2016.

\bibitem{Selberg} A. Selberg, \textit{Harmonic analysis and discontinuous groups in weakly symmetric Riemannian spaces with applications to Dirichlet series},  Journal Indian Math. Soc. (N.S.) 20, 1956, p.p. 47--87.

\bibitem{Verjovsky} A. Verjovsky, \textit{Arithmetic geometry and dynamics in the unit tangent bundle of the modular orbifold}, Dynamical systems (Santiago, 1990), Pitman Res. Notes Math., vol. 285, Longman Sci. Tech., Harlow, 1993, p.p. 263--298.

\bibitem{Zagier} D. Zagier, \textit{Eisenstein Series and the Riemann zeta-function}. In Automorphic Forms, Representation Theory and Arithmetic, p.p. 275--301. Springer-Verlag, Berlin, Heidelberg, New York, 1981.

\bibitem{Zagier2} D. Zagier, \textit{Eisenstein series and the Selberg trace formula I}, Automorphic forms, representation theory and arithmetic (Bombay, 1979), Tata Inst. Fund. Res. Studies in Math., vol. 10, Tata Institute of Fundamental Research, Bombay, 1981, p.p. 303--355. 

\end{thebibliography}
\end{document}